\newcommand{\CTP}{CTP}
\newcommand{\ds}{\displaystyle}
\newcommand{\N}{\mathbb{N}}
\newcommand{\R}{\mathbb{R}}
\newcommand{\K}{\mathscr{K}}
\newcommand{\eps}{\epsilon}
\newcommand{\gref}{\gamma_{\mathrm{ref}}}
\newcommand{\dgref}{{\dot\gamma}_{\mathrm{ref}}}
\newtheorem{thm}{Theorem}[section]
\newtheorem{prop}[thm]{Proposition}
\newtheorem{defn}[thm]{Definition}
\newtheorem{lemma}[thm]{Lemma}
\newtheorem{corol}[thm]{Corollary}
\newtheorem{remarkth}[thm]{Remark}
\newenvironment{remark}{\begin{remarkth}\upshape}{\hfill$\diamond$\end{remarkth}}
\renewcommand{\emph}[1]{{\bfseries\itshape{#1}}}
\def\qed{\ifvmode\removelastskip\fi
{\unskip\nobreak\hfil\penalty50\hbox{}\nobreak\hfil \hbox{\vrule
height1.2ex width1.2ex}\parfillskip=0pt \finalhyphendemerits=0
\par \smallskip}}
\title{New high order sufficient conditions for configuration tracking}
\author{{\sc M. Barbero-Li\~n\'an}, \
 {\sc M. Sigalotti}
 }
\begin{document}

\maketitle

\begin{abstract}
In this paper, we propose new conditions guaranteeing that the trajectories of a mechanical
control system
 can track any curve on the configuration
manifold. We focus on systems that can be represented as forced
affine connection control systems and we generalize the sufficient
conditions for
tracking known in the literature.
The new results are proved by a combination of averaging procedures by highly oscillating controls with the notion of kinematic reduction.  
\end{abstract}


\section{Introduction}\label{S1Intro}

New geometric techniques are used to generalize tracking conditions known in the literature~\cite{2005BulloAndrewBook,BS2010,Mario}. The tracking problem plays a key role in the performance of robots and mechanical systems such as submarines and hovercrafts in order to avoid obstacles, stay nearby a preplanned trajectory, etc. 

Mechanical control systems are control-affine systems on the tangent bundle of the con\-fi\-gu\-ration manifold $Q$. 
In order to simplify the motion planning tasks for these control systems, a useful tool has been introduced  in the geometric control literature, namely, the notion of kinematic reduction. Such a procedure consists in identifying 
a control-linear system on $Q$ whose trajectory mimic those of the mechanical system. This approach has been useful to describe controllability, planning properties~\cite{2005BulloAndrewBook} and optimality~\cite{2008Texas} of mechanical systems. However, as described in~\cite{2005BulloAndrewBook}, kinematic reduction is not always possible, some conditions related to the symmetric closure of the control vector fields of both systems under study must be satisfied.

In our previous work~\cite{BS2010} we extended the first-order sufficient conditions for tracking proposed in~\cite{2005BulloAndrewBook} by using different families of vector fields, possibly of infinite cardinality. Related constructions to generate admissible directions for tracking have been proposed in \cite{Bressan,Jorge} (see also \cite{AgSa1,AgSa2}). 

 Our first goal in this current paper is to establish a relationship between families of vector fields defined pointwise and sets of sections of the tangent bundle defined in a recurrent way, similarly to the classical Malgrange theorem~\cite{Malgrange67}.
This new pointwise characterization of families of vector fields used in~\cite{BS2010} allows to use kinematic reduction in order to obtain more general sufficient tracking conditions. As a result, it can be proved that an underwater vehicle with a natural choice of control vector fields is always trackable, even in the most symmetric case (see the example in Section~\ref{SExample} for more details).

The paper is organized as follows. Section~\ref{S2Notation} contains all the necessary background in forced affine connection control systems~\cite{2005BulloAndrewBook}. Section~\ref{STrack} defines the notion of trackability under study. 
After recalling in Section 4.1 the high-order tracking conditions known in the literature and obtained by averaging theory~\cite{BS2010}, kinematic reduction is used to obtain more general tracking conditions in Section~\ref{SNewTrack}. The full characterization of the trackability of the system 
describing the motion of an underwater vehicle is achieved in Section~\ref{SExample}, concluding the study started in~\cite{Mario} and continued in~\cite{BS2010}.

\section{Notation and preliminaries}\label{S2Notation}

Denote by $\mathbb{N}$ the set of positive natural numbers
and write  $\N_0$ for $\N\cup\{0\}$.
Fix $n\in\mathbb{N}$.
From now on, $Q$ is a $n$\textendash{}dimensional smooth manifold
and $\mathfrak{X}(Q)$ denotes the set of smooth vector fields on
$Q$. All  vector fields are considered smooth as functions on $Q$, unless otherwise
stated. Let $\tau_Q\colon TQ \rightarrow Q$ be the canonical tangent
projection. A \textit{vector field 
along
$\tau_Q$} is a mapping $X\colon TQ \rightarrow TQ$ such that
$\tau_Q\circ X= \tau_Q$.
We denote by $I$ a compact interval of the type $[0,\tau]$, $\tau>0$.

\subsection{Affine connection control systems}

The trajectories $\gamma:I\to Q$ of a Lagrangian mechanical systems
on a manifold $Q$ are minimizers of the action functional
\[A_L(\gamma)=\int_I L(t,\dot{\gamma}(t)){\rm d}t\]
associated with a Lagrangian function $L\colon \mathbb{R} \times TQ\rightarrow \mathbb{R}$.

 The solutions to this variational problem 
must 
satisfy
the well-known Euler\textendash{}Lagrange
equations,
\begin{equation}\label{euler-lagrange}
\frac{{\rm d}}{{\rm d}t}\left( \frac{\partial L}{\partial
v^i}\right)-\frac{\partial L}{\partial q^i}=0,\quad i=1,\dots,n,
\end{equation}
where $(q^i,v^i)$ are local coordinates for $TQ$. Here we consider
controlled Euler\textendash{}Lagrange equations obtained by
modifying the right-hand side on the above equation, as
follows:
\[\frac{{\rm d}}{{\rm d}t}\left( \frac{\partial L}{\partial
v^i}\right)-\frac{\partial L}{\partial
q^i}=\sum_{a=1}^ku_aY_a^i,\quad i=1,\dots,n,\] with $u_a\colon I
\rightarrow \mathbb{R}$, $Y_a^i\colon Q \rightarrow \mathbb{R}$.

 When the manifold $Q$ is endowed with the Riemannian structure given by a Riemannian metric
$g$ and the Lagrangian function $L_g(v_q)=\frac{1}{2}g(v_q,v_q)$ is
considered, the solutions to (\ref{euler-lagrange}) turn out to be the
geodesics of the Levi\textendash{}Civita affine connection
$\nabla^g$ associated with the Riemannian metric. (See
\cite{2005BulloAndrewBook} for more details and
for many examples of mechanical control systems that fit in this
description.)

When control forces are added to the geodesic equations we obtain an
affine connection control system
\[\nabla^g_{\dot{\gamma}(t)}\dot{\gamma}(t)=\sum_{a=1}^ku_a(t)Y_a(\gamma(t)),\]
where $Y_1,\dots,Y_k$ are vector fields on $Q$.

The notion of affine connection control system can be extended
without the need of the Levi\textendash{}Civita connection.

\begin{defn} An \textbf{affine connection}
is a mapping
\[\begin{array}{rcl}
\nabla\colon\mathfrak{X}(Q)\times \mathfrak{X}(Q)&
\longrightarrow & \mathfrak{X}(Q)\\
(X,Y) & \longmapsto & \nabla(X,Y)=\nabla_XY,
\end{array}\] satisfying the following
properties:
\begin{enumerate}
\item $\nabla$ is $\mathbb{R}$\textendash{}linear on $X$ and on $Y$;
\item $\nabla_{fX}Y=f\nabla_XY$ for every $f\in {\mathcal C}^{\infty}(Q)$;
\item $\nabla_XfY=f\nabla_XY+\left(Xf \right)Y$, for every $f\in {\mathcal C}^{\infty}(Q)$. (Here $Xf$ denotes the derivative
of $f$ in the direction $X$.)
\end{enumerate}
\end{defn}

 The mapping $\nabla_XY$ is called the \textit{covariant derivative of $Y$
 with respect to $X$}. Given local coordinates $(q^i)$ on $Q$, the
 \textit{Christoffel symbols for the affine connection} in these
 coordinates are given by
 \[\ds{\nabla_{\frac{\partial}{\partial q^j}}\frac{\partial}{\partial q^r}=\sum_{i=1}^n
 \Gamma^i_{jr}\frac{\partial}{\partial q^i},\qquad j,r=1,\dots,n.}\]
 From the properties of the affine connection, we have
 \[\ds{\nabla_XY=\sum_{i,j,r=1}^n\left(X^j\frac{\partial Y^i}{\partial q^j}+\Gamma^i_{jr}X^jY^r
 \right)\frac{\partial}{\partial q^i}},\]
 where $X=\sum_{i=1}^nX^i\partial/\partial q^i$ and $Y=\sum_{i=1}^nY^i\partial/\partial
 q^i$.

\begin{defn}\label{Def-FACCS} A \textbf{forced affine connection control system
(FACCS)} is a control mechanical system given by $\Sigma=(Q,\nabla,Y, \mathscr{Y},U)$ where
\begin{itemize}
\item $Q$ is
a smooth $n$\textendash{}dimensional manifold called the
\textit{configuration manifold},
\item $Y$ is a smooth time-dependent vector field along the projection $\tau_Q\colon TQ\rightarrow Q$, 
 affine with respect to the velocities, 
\item $\mathscr{Y}$ is a set of $k$
control vector fields on $Q$, and
\item $U$ is a measurable subset of $\R^k$.
\end{itemize}
A trajectory $\gamma\colon I\subset \mathbb{R} \rightarrow Q$ is
\textbf{admissible for $\mathbf{\Sigma}$} if $\dot \gamma\colon I
\rightarrow TQ$ is absolutely continuous and there exists a
measurable and bounded control $u\colon I \rightarrow U$ such that
the dynamical equations of the control system~$\Sigma$
\begin{equation}\label{nabla}\nabla_{\dot{\gamma}(t)}\dot{\gamma}(t)=Y(t,\dot{\gamma}(t))+\sum_{a=1}^ku_a(t)Y_a(\gamma(t)),
\end{equation}
are fulfilled (for almost every $t\in I$).
\end{defn}

The vector field $Y$ 
includes all the non-controlled external forces; e.g., the potential
and the non-potential forces. The assumption that $Y$
is affine with respect to the velocities means that, in every
local system of coordinates $(q^i,v^i)$ on $TQ$, $Y$ can be written
as
$$Y(t,v_q)=Y_0(t,q)+\sum_{i=1}^n v^i Y^i(t,q).$$

Equation $(\ref{nabla})$ can be rewritten as a
first-order control-affine system on $TQ$,
\begin{equation}
\label{first}
\dot{\Upsilon}(t)=Z(\Upsilon(t))+Y^V(t,\Upsilon(t))+\sum_{a=1}^ku_a(t)Y^V_a(\Upsilon(t)),
\end{equation}
where $\Upsilon\colon I \rightarrow TQ$ is such that  $\tau_Q \circ
\Upsilon=\gamma$, $Z$ is the geodesic spray associated with the affine
connection on $Q$ and,
for every $X\in \mathfrak{X}(Q)$,  $X^V$ denotes the vertical lift of 
$X$ (see \cite{AbrahamMarsden} for more details).

Apart from the usual Lie bracket that provides $\mathfrak{X}(Q)$
with a Lie algebra structure, one can associate with 
$\nabla$ the following product in $\mathfrak{X}(Q)$. 
\begin{defn} The \textbf{symmetric product} is the map
\begin{eqnarray*}\label{sym} \langle \cdot \colon \cdot \rangle\colon
\mathfrak{X}(Q) \times  \mathfrak{X}(Q) & \longrightarrow &
\mathfrak{X}(Q) \\
(X,Y) & \longmapsto & \nabla_XY +\nabla_Y X.
\end{eqnarray*}
\end{defn}

It can be proved that
\begin{equation}\label{triple_bracket}
[Y_a^V,[Z,Y_b^V]]=\langle Y_a \colon Y_b
\rangle^V
\end{equation}
(see \cite{2005BulloAndrewBook}).

\section{Tracking problem}\label{STrack}

We consider here the problem arising when one tries to follow a
particular trajectory on the configuration manifold, called \textit{reference} or
\textit{target} trajectory, which is in general not a solution of the FACCS
considered. A trajectory is successfully tracked if there exist
solutions to the FACCS that approximate it arbitrarily well.

Consider any distance ${\rm d}\colon Q\times Q \rightarrow \mathbb{R}$ on $Q$
whose corresponding metric topology coincides with the topology on $Q$.

\begin{defn} A curve $\gamma\colon I\rightarrow Q$
of class ${\mathcal C}^1$
is
\textbf{trackable for the FACCS $\Sigma$} if, for every strictly
positive tolerance $\epsilon$, there exist a control $u^{\epsilon}\in L^\infty(I,U)$
and a 
solution $\xi^{\epsilon}\colon I
\rightarrow Q$  to $\Sigma$ corresponding to  $u^\eps$ 
such that 
${\xi}^\epsilon(0)={\gamma}(0)$ and
\[{\rm d} (\gamma(t), \xi^{\epsilon}(t))<\epsilon\]
for every $t\in I$. The trajectory 
is said to be \textbf{strongly trackable for $\Sigma$} if, in addition to the above requirements, for every $\eps>0$ the approximating trajectory ${\xi}^\epsilon$ 
may be found also satisfying 
$\dot{\xi}^\epsilon(0)=\dot{\gamma}(0)$. 

A control system $\Sigma$ satisfies the \textbf{configuration tracking property (CTP)} (respectively, the \textbf{strong configuration tracking property (SCTP)}) if every curve on
$Q$ of class $\mathcal{C}^1$  is trackable (respectively, strongly trackable)  for $\Sigma$.
\end{defn}
\begin{remark}
Since any $\mathcal{C}^1$ curve can be uniformly approximated, with arbitrary precision, by a smooth curve having the same tangent vector at its initial point,
then $\Sigma$ satisfies the  \CTP\ (respectively, the SCTP) if and only if every curve on
$Q$ of class $\mathcal{C}^\infty$  is  trackable (respectively, strongly trackable) for $\Sigma$.
\end{remark}

\subsection{Tracking results for control-linear systems}\label{control-linear}

A {\it control-linear system} (also  called {\it driftless kinematic system}) on $Q$ is a triple $(Q,\mathscr{X},U)$ where 
 $\mathscr{X}$ is a finite subset $\{X_1,\dots,X_m\}$ of $\mathfrak{X}(Q)$ and $U$ is a measurable subset of $\R^m$, identified with the  
control system 
$$\dot \gamma(t)=\sum_{a=1}^m u_a(t) X_a(\gamma(t)),\qquad \gamma(t)\in Q,$$
where $u_1(\cdot),\dots,u_m(\cdot)$ are measurable and bounded functions with $(u_1(t),\dots,u_m(t))\in U$ for every $t$.

\begin{prop}[See \cite{Liu,LS}]\label{LiuSussmann}
Let $X_1,\dots,X_m$ be smooth vector fields on $Q$ and 
take $\kappa\in \N$. 
Let $\{X_1,\dots, X_{\hat m}\}$ be the set of all  Lie brackets of the vector fields $X_1,\dots,X_m$ of length less than or equal to $\kappa$. 
Assume that $\gamma:I\to Q$ is a $\mathcal{C}^\infty$ curve such that
$$\dot \gamma(t)=\sum_{a=1}^{\hat m} w_a(t) X_a(\gamma(t)),$$
with $w:I\to \R^{\hat m}$ smooth.
Then, for  every $\eps>0$ there exists a 
solution $\gamma_\eps$ of the control-linear system $(Q,\{X_1,\dots,X_m\},\R^m)$
with smooth control $u_\eps:I\to \R^m$  and 
initial condition 
$\gamma_\eps(0)=\gamma(0)$ 
such that $d(\gamma(t),\gamma_\eps(t)) <\eps$ for
every $t\in I$. 
\end{prop}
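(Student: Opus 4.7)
The approach is the classical one of realizing motions along iterated Lie brackets via highly oscillating controls with suitably scaled amplitudes.

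First, I would discretize time. Partition $I=[0,\tau]$ into subintervals $[t_k,t_{k+1}]$ of common length $\delta>0$ and replace $w(t)$ by the constant $w(t_k)$ on each. The concatenated flow of $\sum_{a=1}^{\hat m} w_a(t_k) X_a$ starting at $\gamma(t_k)$ approximates $\gamma$ uniformly on $I$ with error $O(\delta)$ (the per-step error is $O(\delta^2)$ over $O(1/\delta)$ steps). It therefore suffices, on each subinterval, to approximate a flow of the form $\exp(\delta\sum_{a=1}^{\hat m}c_a X_a)$ from a given starting point by a trajectory of the control-linear system $(Q,\{X_1,\dots,X_m\},\R^m)$ from the same point, within a tolerance smaller than $\delta/\tau$.

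Next I would invoke the central averaging lemma, which is the content of the cited results: for any Lie bracket $X_a$ of length $\ell\le\kappa$ in $X_1,\dots,X_m$ and any scalar $\lambda$, there is a trigonometric control $u^\omega$ of frequency $\omega$ and amplitude polynomially growing in $\omega$ such that, as $\omega\to\infty$, the flow of $\sum_{j=1}^m u^\omega_j(t) X_j$ over a fixed time interval converges uniformly on compacts to $\exp(\lambda X_a)$. This is derived from a chronological/BCH expansion in which $X_a$ appears as the leading averaged contribution, while all lower-order averaged terms vanish by the choice of amplitudes and phases. Splicing such elementary oscillating blocks in sequence, one block per bracketed $X_a$ appearing in the target sum, produces on each subinterval a trajectory reaching the desired point up to an error $o(1)$ as $\omega\to\infty$.

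Finally, concatenating these local controls across the subintervals and smoothing the joints using $C^\infty$ bump functions supported on short transition zones produces a smooth control $u^\eps\in C^\infty(I,\R^m)$ whose trajectory remains $\eps$-close to $\gamma$ on $I$, provided $\delta$ is small and $\omega$ is large. The hard part will be the uniform control of the BCH remainders: these involve Lie brackets of length greater than $\kappa$ multiplied by high powers of the amplitude, and must stay negligible on each subinterval despite the large amplitudes. Propagating these local estimates into a global uniform bound on $I$ via a Gronwall-type argument whose constants do not blow up with $\omega$ is the main technical obstacle, and it is precisely what the scaling devised in \cite{Liu,LS} achieves.
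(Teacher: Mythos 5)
The paper provides no proof of Proposition~\ref{LiuSussmann}; it is stated as a known result and attributed to \cite{Liu,LS}. There is therefore no in-paper argument to compare against, and your proposal must be judged against what those references actually do.

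Your outline is in the right spirit (highly oscillatory controls whose averaged effect produces the bracket directions), and you honestly flag the crux, namely the uniform control of the remainders. However, your specific route --- freeze $w$ on a time grid of step $\delta$, then on each subinterval realize $\exp\bigl(\delta\sum_a c_a X_a\bigr)$ by splicing one elementary oscillating block per bracket, and finally concatenate --- is not the construction of \cite{Liu,LS}, and it genuinely compounds the difficulty rather than isolating it. To realize a length-$\ell$ bracket direction of size $\delta$ by an elementary block of duration $\delta$, the control amplitude must scale like $\delta^{1/\ell-1}$, which blows up as $\delta\to 0$; the per-step BCH remainder, driven by those amplitudes, is then not obviously $o(\delta)$, so the $O(1/\delta)$ concatenation does not obviously close. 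In other words, the two limits $\delta\to0$ and $\omega\to\infty$ are entangled in your scheme, and you have not indicated how to decouple them. By contrast, Liu and Liu--Sussmann do not discretize time and concatenate local endpoint moves at all: they construct a single, global, time-dependent highly oscillatory control $u^\omega$ on all of $I$ (with a specific amplitude/phase scaling across the whole interval adapted to the smooth coefficient map $w$) and prove a \emph{uniform-on-$I$} convergence theorem for solutions of ODEs with rapidly oscillating right-hand sides, via a chronological-calculus/iterated-integrals expansion. The decomposition of the error into averaged and oscillatory parts is done once, globally, and the Gronwall estimate is with respect to a single frequency parameter, which is why the bound does not degenerate. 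That global convergence theorem is the mathematical content of the cited result; your sketch points at it but replaces it with a local-splice-and-pray scheme whose error accounting is not set up to succeed. A secondary, smaller issue: the ``smooth the joints with bump functions'' step also introduces an $O(1)$ perturbation on the transition zones that needs its own uniform estimate; in the Liu--Sussmann framework the constructed control is smooth from the outset, so this issue never arises.
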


From the above proposition we deduce the following result. (Similar arguments can be found in \cite{Jakubczyk}.)

\begin{corol}\label{tracking-control-linear}
If the Lie algebra $\mathrm{Lie}(X_1,\dots,X_m)$
generated by $X_1,\dots,X_m$ has constant rank on $Q$, then for every smooth curve 
$\gamma:I\to Q$ and for  every $\eps>0$ there exists a 
solution $\gamma_\eps$ of the control-linear system $(Q,\{X_1,\dots,X_m\},\R^m)$
with smooth control $u_\eps:I\to \R^m$  and 
initial condition 
$\gamma_\eps(0)=\gamma(0)$ 
such that $d(\gamma(t),\gamma_\eps(t)) <\eps$ for
every $t\in I$. 
\end{corol}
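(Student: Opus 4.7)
The plan is to apply Proposition~\ref{LiuSussmann} directly to $\gamma$; the only hypothesis not automatically in hand is the existence of a \emph{smooth} coefficient curve $w:I\to\R^{\hat m}$, together with a uniform bracket length $\kappa$, such that $\dot\gamma(t)=\sum_{a=1}^{\hat m}w_a(t)X_a(\gamma(t))$. The argument therefore splits into three steps: (i) produce a uniform $\kappa$ along $\gamma$; (ii) construct a smooth $w$; (iii) invoke Proposition~\ref{LiuSussmann}.

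For step (i), note that, for the statement to be meaningful for an arbitrary smooth curve, the constant rank of $\mathrm{Lie}(X_1,\dots,X_m)$ must equal $n=\dim Q$. Under this full-rank hypothesis, at each $q\in Q$ some finite list of iterated Lie brackets of $X_1,\dots,X_m$ already spans $T_qQ$; by lower semicontinuity of the matrix rank this spanning property persists on a neighborhood of $q$. A finite subcover of the compact set $\gamma(I)$ then yields an integer $\kappa\in\N$ and an open neighborhood $V\supset\gamma(I)$ such that the finite family $X_1,\dots,X_{\hat m}$ of all iterated brackets of length at most $\kappa$ spans $T_qQ$ at every $q\in V$.

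For step (ii), I would exploit the surjectivity of the linear map $\R^{\hat m}\ni c\mapsto\sum_{a} c_a X_a(q)\in T_qQ$ at each $q\in V$. In any coordinate chart $U\subset V$ this map is represented by an $n\times\hat m$ matrix $M(q)$ of full row rank, so $M(q)M(q)^\top$ is invertible on $U$ and the Moore--Penrose right inverse $M(q)^\top\bigl(M(q)M(q)^\top\bigr)^{-1}$ is a smooth function of $q$. A partition of unity subordinate to a chart cover of $V$ patches these local right inverses into a smooth global right inverse $\sigma$ defined on all of $TQ|_V$, using that a convex combination of right inverses of a surjective linear map is again a right inverse. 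Setting $w(t):=\sigma(\gamma(t))\dot\gamma(t)$ then furnishes the desired smooth coefficient curve, which lies automatically in the pointwise span $X_1(\gamma(t)),\dots,X_{\hat m}(\gamma(t))$ of Proposition~\ref{LiuSussmann}.

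Step (iii) is then immediate. I expect step (ii) to be the substantive part of the argument: pointwise surjectivity of a smooth family of linear maps does not in general produce a smooth right inverse, and the constant-rank hypothesis enters precisely here, by guaranteeing invertibility of $M M^\top$ throughout $V$ and thus allowing the pseudo-inverse construction to depend smoothly on the base point.
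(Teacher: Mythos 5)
Your proposal is correct and follows the same outline as the paper's proof: cover the compact set $\gamma(I)$ by finitely many open sets to obtain a uniform bracket length $\kappa$, then express $\dot\gamma$ as a smooth combination $\sum_a w_a(t)X_a(\gamma(t))$ of the enlarged family of brackets, and finally invoke Proposition~\ref{LiuSussmann}. The only real difference is that the paper simply asserts the existence of a smooth $w$, whereas you spell out a construction via the Moore--Penrose right inverse $M(q)^\top\bigl(M(q)M(q)^\top\bigr)^{-1}$ glued by a partition of unity (a convex combination of right inverses being again a right inverse); the paper's implicit argument is presumably to extract on each $\Omega_j$ a local sub-basis from the brackets, solve uniquely for the coefficients there, and patch. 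Both work; yours avoids picking sub-bases at the cost of a coordinate-dependent pseudo-inverse.

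One remark on the hypothesis: you are right that, for the conclusion to hold for \emph{every} smooth curve, the constant rank must in fact equal $\dim Q$, and the paper's wording is slightly imprecise on this point. Note, however, that in the places where the corollary is actually invoked (Theorems~\ref{Thm:ACCStrackZ} and \ref{Thm:ACCStrackK}) the rank may well be lower, but the reference curve is separately assumed tangent to the distribution $\mathrm{Lie}(\K_{l-1})$ or $\mathrm{Lie}(\mathscr{Z}_{l-1})$; in that situation your pseudo-inverse formula as written breaks down, since $M(q)M(q)^\top$ is no longer invertible. A version of your construction that also covers the tangent, non-full-rank case would replace $M^\top(MM^\top)^{-1}$ with a local right inverse of the restriction of $M(q)$ onto the distribution (e.g.\ by choosing a smooth local frame of $r$ brackets spanning the distribution and inverting the resulting $r\times r$ Gram block), and then proceed with the same partition-of-unity patching.
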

\begin{proof}
The proof works by covering the 
compact set $\gamma(I)$ by finitely many open sets $\Omega_1,\dots,\Omega_K$ 
of $Q$  such that for every $j=1,\dots,K$ there exists on $\Omega_j$ a basis of the distribution
 $\mathrm{Lie}(X_1,\dots,X_m)$ made of Lie brackets of $X_1,\dots,X_m$.
Let $\kappa$ be the maximum of the length of the brackets used to construct such bases
and let $\{X_1,\dots, X_{\hat m}\}$ be the set of all  Lie brackets of the vector fields $X_1,\dots,X_m$ of length less than or equal to $\kappa$. 
Then 
$$\dot \gamma(t)=\sum_{a=1}^{\hat m} w_a(t) X_a(\gamma(t)),$$
with $w:I\to \R^{\hat m}$ smooth and we conclude by  Proposition~\ref{LiuSussmann}.
\end{proof}

\subsection{Previous strong configuration tracking results}\label{SPreResults}

Conditions guaranteeing the  SCTP have been obtained in \cite{BS2010}, generalizing previous results presented in \cite{2005BulloAndrewBook} (in particular Theorem 12.26) and in \cite{Mario}. 
We recall them here below in a version adapted to what follows.
The main difference of these statements from the ones of Theorem~4.4 and Corollary~4.7 in \cite{BS2010} is that here we focus on the strong configuration trackability of a given 
trajectory, instead of looking at the SCTP. The proof is however exactly the same, since the proof proposed in \cite{BS2010} is based on an argument where the target  trajectory is fixed.

\begin{prop}\label{thm:oldtrackK}
Let $\Sigma=(Q,\nabla,Y,\mathscr{Y},\mathbb{R}^{k})$ be  a FACCS.
Construct the following
set of vector fields on $Q$:
\begin{equation}\label{setK}\begin{array}{rcl} {\mathscr{K}}_0&=&
\overline{{\rm span}_{\mathcal{C}^\infty(Q)}\mathscr{Y}},\\[1.5mm]
\mathscr{K}_l&=&\overline{\mathscr{K}_{l-1}- {\rm co}\left\{\langle
Z \colon Z \rangle \mid Z\in {\rm L}(\mathscr{K}_{l-1})\right\}},
\end{array}\end{equation} for $l\in \mathbb{N}$, where, for $A\subset \mathfrak{X}(Q)$,
 ${\rm L}(A)=A\cap (-A)$, ${\rm co}(A)$ denotes the convex hull  of $A$, and
 $\overline{A}$ is the closure of $A$
in $\mathfrak{X}(Q)$ with respect to the topology of
the uniform convergence on compact sets.

Fix a smooth reference trajectory $\gref:I\rightarrow Q$ of class
${\mathcal C}^\infty$. 
Assume that there exist $l,N\in
\mathbb{N}$, $\lambda_1,\dots,\lambda_N\in C^\infty(I,[0,+\infty))$,  and $Z_1,\dots,Z_N\in \mathscr{K}_l$ such that 
$$\nabla_{\dgref(t)}\dgref(t)-Y(t,\dgref(t))=\sum_{a=1}^N \lambda_a(t)Z_a(\gref(t)),\qquad \forall t\in I\,. $$
Then $\gref$
is strongly trackable. 
\end{prop}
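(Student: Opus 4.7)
The plan is to argue by induction on the level $l$ at which the vector fields $Z_a$ sit in the hierarchy $\mathscr{K}_0\subset \mathscr{K}_1\subset \cdots$, using averaging by highly oscillating controls at each step. The base case $l=0$ is essentially trivial: since $\mathscr{K}_0$ is the closure of the $\mathcal{C}^\infty(Q)$-span of $\mathscr{Y}$, each $Z_a$ can be arbitrarily well approximated by a $\mathcal{C}^\infty(Q)$-linear combination of the control vector fields $Y_1,\dots,Y_k$. Substituting into the hypothesis one obtains a smooth feedback $u(t)=(u_1(t),\dots,u_k(t))$ such that $\gref$ is (approximately) an exact trajectory of $\Sigma$ with control $u$ and initial datum $\dgref(0)$, so strong tracking follows from continuous dependence of the flow on the parameters.

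For the inductive step I would fix $l\ge 1$ and assume that the proposition holds when one replaces $\mathscr{K}_l$ by $\mathscr{K}_{l-1}$. The crucial averaging tool is the identity~(\ref{triple_bracket}), $[Y_a^V,[Z,Y_b^V]]=\langle Y_a\colon Y_b\rangle^V$, rephrased in the first-order form (\ref{first}): if $W\in \mathrm{L}(\mathscr{K}_{l-1})$, so that by the inductive hypothesis both $W$ and $-W$ are realizable as effective configuration-level directions at level $l-1$, then a sinusoidal oscillation of amplitude $\sqrt{\omega}$ and frequency $\omega$ switching between these two directions produces, in the $\omega\to\infty$ limit, a net acceleration equal to $-\tfrac12\langle W\colon W\rangle$ along $\gref$. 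This is the standard Bullo--Lewis averaging mechanism applied to the effective system at level $l-1$. A convex combination $\sum_j c_j \langle W_j\colon W_j\rangle$ with $W_j\in \mathrm{L}(\mathscr{K}_{l-1})$ is then realized by partitioning the fast time-scale into successive short windows of relative length $c_j$ in which one oscillates around $W_j$; rescaling the amplitudes by $1/\sqrt{c_j}$ guarantees the correct average on each window. Finally, closure under uniform convergence on compact sets is absorbed by a density argument, since configurations stay in a fixed compact neighborhood of $\gref(I)$.

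To match the strong-tracking requirement $\dot\xi^\eps(0)=\dgref(0)$, I would multiply the amplitude of every layer of oscillation by a smooth cutoff $\chi_\eps(t)$ that equals $1$ outside $[\eps,\tau]$ and vanishes to high order at $t=0$; one then verifies, as in \cite{BS2010}, that both the averaged velocity and the oscillatory correction at $t=0$ vanish, while the uniform distance $d(\gref(t),\xi^\eps(t))$ remains controlled on the whole of $I$ by the error estimates of the averaging procedure and by Gronwall-type inequalities on $TQ$. Iterating this construction $l$ times, each outer oscillation frequency chosen much larger than the inner ones, realizes an effective control direction arbitrarily close to $\sum_a \lambda_a(t) Z_a(\gref(t))$, which by hypothesis is exactly $\nabla_{\dgref(t)}\dgref(t)-Y(t,\dgref(t))$.

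The main obstacle is the nested nature of the induction: at level $l$, the ``effective vector fields'' that one averages are not literal inputs of $\Sigma$ but themselves already the outcome of $l-1$ layers of oscillation, each of which is only an approximate direction. One must therefore formulate the inductive statement strongly enough to survive this nesting, keeping quantitative control on how the averaging error at level $l$ depends on the residual errors at levels $<l$, and choosing the frequency hierarchy $\omega_1\ll \omega_2\ll \cdots \ll \omega_l$ so that each outer time-scale sees the inner ones as already averaged. The non-negativity constraint $\lambda_a\ge 0$ is essential here, since the $\mathscr{K}_l$ are convex but not symmetric cones, and thus only positive combinations of the $-\langle W\colon W\rangle$ can be produced by averaging.
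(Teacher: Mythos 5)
The paper does not actually prove Proposition~\ref{thm:oldtrackK}: it is recalled from \cite{BS2010}, with the remark that the proof there works for a fixed target trajectory, so there is no proof in this text to compare with. What can be said is that the mechanism you invoke --- averaging by highly oscillating controls, the triple-bracket identity~\eqref{triple_bracket}, the essential role of the non-negativity of the $\lambda_a$ (because $\mathscr{K}_l$ is a convex cone, not a linear space, and only $-\langle W\colon W\rangle$ is produced by oscillation), time-scale separation, and a cutoff near $t=0$ to match the initial velocity --- is indeed what \cite{BS2010} uses, and your description of it is accurate in spirit.

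There is, however, a genuine gap in the way you set up the induction. Your inductive hypothesis is ``the proposition holds with $\mathscr{K}_{l-1}$ in place of $\mathscr{K}_{l}$'', i.e.\ a statement about strong trackability of a \emph{single} reference curve whose residual lies pointwise in (a finitely-generated cone inside) $\mathscr{K}_{l-1}$. But in the inductive step you then use something strictly stronger: that for any $W\in\mathrm{L}(\mathscr{K}_{l-1})$, both $W$ and $-W$ are ``realizable as effective configuration-level directions'', uniformly in time and with error bounds stable enough that the outer oscillation at frequency $\omega_l$ sees the inner, already-averaged system as exact. That is a quantitative, uniform, operator-type statement --- not trackability of a fixed curve --- and it does not follow from the inductive hypothesis as written. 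You acknowledge this (``One must therefore formulate the inductive statement strongly enough to survive this nesting''), but acknowledging the difficulty is not the same as resolving it; as it stands the recursion does not close.

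A related but distinct point: identity~\eqref{triple_bracket} applies to the vertical lifts of the \emph{original} control vector fields $Y_a$ and the geodesic spray $Z$. An abstract $W\in\mathrm{L}(\mathscr{K}_{l-1})$ is not an input channel of $\Sigma$, so you cannot ``oscillate between $W$ and $-W$'' directly at the control level. In practice one must first unroll the recursive definition~\eqref{setK} all the way down to $\mathscr{K}_0$ (a $\mathcal{C}^\infty(Q)$-module over $\mathscr{Y}$), then approximate each intermediate element by a finite expression in the $Y_a$, and only then design a single multi-frequency input whose iterated averaging produces the required nonnegative combination of $-\langle W_j\colon W_j\rangle$'s. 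Your frequency hierarchy $\omega_1\ll\cdots\ll\omega_l$ is the right intuition for this unrolling, but the proposal stops short of specifying the stronger inductive invariant (or the explicit non-inductive construction) that would make the errors at the $l$ levels compatible. Until that is done, the proof attempt is a plausible sketch of the \cite{BS2010} mechanism, not a complete argument.
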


\begin{prop}\label{prop:track-Z}
Let $\Sigma=(Q,\nabla,Y,\mathscr{Y},\R^k)$ be a FACCS. Define the following sets of
vector fields for $l\in \mathbb{N}$,
\begin{align} \mathscr{Z}_0=& \mathscr{Y},\nonumber\\
\mathscr{Z}_l=& \mathscr{Z}_{l-1} \cup \{\langle Z_a\colon
Z_b\rangle \mid Z_a,Z_b\in \mathscr{Z}_{l-1}\}.\label{setZ}
\end{align}
Assume that there exists $l\in \N$ such that 
for
each $i\in\{0, \ldots, l-1\}$, for each $Z\in \mathscr{Z}_i$,
$\langle Z\colon Z\rangle\in {\rm
span}_{\mathcal{C}^\infty(Q)}\mathscr{Z}_i$.

Let $\mathscr{Z}_l=\{Z_1,\dots,Z_N\}$. 
Fix a smooth reference trajectory $\gref:I\rightarrow Q$ of class
${\mathcal C}^\infty$. 
If there exist  $\lambda_1,\dots,\lambda_N\in C^\infty(I,\R)$ such that 
$$\nabla_{\dgref(t)}\dgref(t)-Y(t,\dgref(t))=\sum_{a=1}^N \lambda_a(t)Z_a(\gref(t)),\qquad \forall t\in I, $$
 then $\gref$
is strongly trackable. 
\end{prop}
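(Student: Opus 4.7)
The plan is to reduce to Proposition~\ref{thm:oldtrackK} by proving that under the hypothesis both $\mathscr{Z}_l$ and $-\mathscr{Z}_l$ lie in $\mathscr{K}_l$. Once this is established, the given representation
\[
\nabla_{\dgref(t)}\dgref(t) - Y(t, \dgref(t)) = \sum_{a=1}^N \lambda_a(t) Z_a(\gref(t))
\]
is rewritten with non-negative coefficients by setting $M_a := \max_{t \in I} |\lambda_a(t)|$ (finite by compactness of $I$) and splitting $\lambda_a Z_a = (\lambda_a + M_a)\, Z_a + M_a\, (-Z_a)$, which expresses the left-hand side as a sum with non-negative smooth coefficients of $2N$ vector fields in $\mathscr{K}_l$. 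Proposition~\ref{thm:oldtrackK} then yields strong trackability of $\gref$.

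The claim $\pm \mathscr{Z}_i \subset \mathscr{K}_i$ is proved by induction on $i \in \{0, \ldots, l\}$. The base case is immediate because $\mathscr{K}_0$ is a $\mathcal{C}^\infty(Q)$-module containing $\mathscr{Y} = \mathscr{Z}_0$. For the inductive step, the extra hypothesis $\langle Z \colon Z\rangle \in \mathrm{span}_{\mathcal{C}^\infty(Q)}\mathscr{Z}_{i-1}$ (for $Z \in \mathscr{Z}_{i-1}$), combined with $\pm \mathscr{Z}_{i-1} \subset \mathscr{K}_{i-1}$ and the closure properties of $\mathscr{K}_{i-1}$ under addition and multiplication by non-negative smooth functions (using the decomposition $f = (f+g) - g$ with $g \in \mathcal{C}^\infty(Q)$ positive and dominating $|f|$), places $\pm \langle Z \colon Z\rangle$ inside $\mathscr{K}_{i-1}$.

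To handle the new mixed products $\langle Z_a \colon Z_b\rangle$ with $Z_a, Z_b \in \mathscr{Z}_{i-1}$, I would apply the defining rule of $\mathscr{K}_i$ to both $Z_a + Z_b$ and $Z_a - Z_b$: these lie in $L(\mathscr{K}_{i-1})$ by convexity, so $-\langle Z_a \pm Z_b \colon Z_a \pm Z_b\rangle \in \mathscr{K}_i$. The polarization identity
\[
\langle Z_a \pm Z_b \colon Z_a \pm Z_b\rangle = \langle Z_a \colon Z_a\rangle \pm 2\langle Z_a \colon Z_b\rangle + \langle Z_b \colon Z_b\rangle
\]
rearranges to
\[
\mp 2\langle Z_a \colon Z_b\rangle = -\langle Z_a \pm Z_b \colon Z_a \pm Z_b\rangle + \langle Z_a \colon Z_a\rangle + \langle Z_b \colon Z_b\rangle,
\]
whose right-hand side belongs to $\mathscr{K}_i$; hence both signs of $\langle Z_a \colon Z_b\rangle$ lie in $\mathscr{K}_i$, closing the induction. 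The main obstacle is exactly this step: extracting both signs of $\langle Z_a \colon Z_b\rangle$ from the construction of $\mathscr{K}_i$, which only directly provides negatives of symmetric squares. The trick is to use $Z_a + Z_b$ and $Z_a - Z_b$ as two independent test elements and to absorb the unwanted diagonal terms via the hypothesis on $\langle Z \colon Z\rangle$ for $Z \in \mathscr{Z}_{i-1}$.
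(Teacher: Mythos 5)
Your proof is correct in its plan and in its mechanics. The paper itself does not print a proof of Proposition~\ref{prop:track-Z} (it is recalled from \cite{BS2010}), so a direct comparison is impossible, but reducing the statement to Proposition~\ref{thm:oldtrackK} by establishing $\pm\mathscr{Z}_l\subset\mathscr{K}_l$ and then shifting signed coefficients by the constants $M_a$ is the natural and presumably intended route. The polarization step for the mixed products is exactly right: since $\pm Z_a,\pm Z_b\in\mathscr{K}_{i-1}$ by induction and $\mathscr{K}_{i-1}$ is a convex cone, $Z_a\pm Z_b\in\mathrm{L}(\mathscr{K}_{i-1})$, so \eqref{setK} yields $-\langle Z_a\pm Z_b\colon Z_a\pm Z_b\rangle\in\mathscr{K}_i$, and adding the already-controlled diagonal terms $\langle Z_a\colon Z_a\rangle,\langle Z_b\colon Z_b\rangle\in\mathscr{K}_{i-1}\subset\mathscr{K}_i$ extracts both signs of $\langle Z_a\colon Z_b\rangle$.

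There is, however, one step you invoke as a standing fact that actually requires a separate argument: the closure of $\mathscr{K}_{i-1}$ under multiplication by arbitrary non-negative smooth functions, which you use (via the decomposition $f=(f+g)-g$) to place $\pm\langle Z\colon Z\rangle=\pm\sum_j f_jZ_j$ in $\mathscr{K}_{i-1}$. For $\mathscr{K}_0$ this is immediate since it is a closed $\mathcal{C}^\infty(Q)$-module, but for $\mathscr{K}_l$ with $l\geq1$ the recursion \eqref{setK} does not manifestly preserve this property: if $V=W-\sum_j\lambda_j\langle Z_j\colon Z_j\rangle$ and $a\geq0$ is smooth, the term $a\langle Z_j\colon Z_j\rangle$ is not itself a symmetric square of an element of $\mathrm{L}(\mathscr{K}_{l-1})$. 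The fact is true, and the present paper proves it in the first half of the proof of Proposition~\ref{prop:MalgrangeCone}, via the identity $\langle aZ\colon aZ\rangle=a^2\langle Z\colon Z\rangle+bZ$ together with a density argument reducing the general non-negative smooth multiplier to a square. You should either reproduce that argument or cite it explicitly; as written, the phrase \emph{the closure properties of} $\mathscr{K}_{i-1}$ glosses over the one genuinely non-obvious ingredient. With that gap filled, the proof is complete.
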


\section{A generalization of tracking conditions}\label{SMain}

Before introducing the new results about tracking, we need some technical lemmas described in Section~\ref{s-gregoire} and to define kinematic reduction in Section~\ref{s-kinred}. All that is necessary to prove the new theorem about trackability in Section~\ref{SNewTrack}.

\subsection{Pointwise and sectionwise characterisation of $\mathscr{K}_l$}\label{s-gregoire}

The results in this section, in the spirit of the classical Malgrange theorem (see \cite{Malgrange67}), aim at characterizing the  
sets $\mathscr{K}_l$ of sections of $TQ$,   introduced above, in terms of iterated computations of subsets of $TQ$. 
Let us then associate with a
family $\mathscr{Y}=\{Y_1,\dots, Y_k\}\subset \mathfrak{X}(Q)$, in addition 
to the family $\mathscr{K}_l$, the family of subsets of $TQ$ 
defined pointwise, for every $q\in Q$, as
\begin{align}
\widehat{{\mathscr{K}}}_{0,q}&=\;
{\rm span}_{\mathbb{R}}\mathscr{Y}(q), \label{eq:HatK0}\\[1.5mm]
\widehat{\mathscr{K}}_{l,q}&=\; \overline{\widehat{\mathscr{K}}_{l-1,q}- {\rm
co}\left\{\langle Z \colon Z \rangle(q) \mid Z\in \mathfrak{X}(Q),\;Z(q')\in {\rm
L}(\widehat{\mathscr{K}}_{l-1,q'}) \; \forall \; q'\in Q\right\}},
\label{eq:HatKl}
\end{align}
where for any $A\subset \mathfrak{X}(Q)$ we write
$A(q)= \{Y(q)\mid Y\in A\}\subseteq T_qQ$.

Recall that each $\mathscr{K}_l$ is a convex cone in $\mathfrak{X}(Q)$ for the $\mathcal{C}^0_{\mathrm{loc}}$ topology (see \cite[Proposition 4.1]{BS2010}). It is also clear that the recursive definition of $\widehat{{\mathscr{K}}}_{l,q}$ describes a closed convex cone of $T_q Q$.

We need a preliminary result to establish 
the equivalence between the two definitions. 

\begin{lemma} Let $\mathscr{H}$ be a ${\mathcal C}^0_{{\rm loc}}$-closed set of $\mathfrak{X}(Q)$ and assume that $\mathscr{H}$
is closed with respect to finite linear combinations with coefficients in $\mathcal{C}^\infty(Q,[0,\infty))$. 
Then
\begin{equation}
\mathscr{H}=\{V\in\mathfrak{X}(Q) \mid  V(q)\in \mathscr{H}(q) \quad \forall \;
q\in Q\}. \label{eq:lemmaH}
\end{equation}
\label{lemma:H}

\proof The inclusion
\begin{equation*}
\mathscr{H}\subseteq \{V\in\mathfrak{X}(Q) \mid  V(q)\in \mathscr{H}(q) \quad
\forall \; q\in Q\}
\end{equation*}
is trivial and we are left to prove the opposite one. 

If $V \in\mathfrak{X}(Q)$ and  $V(q)\in \mathscr{H}(q)$ for all $q\in Q$, then for every $q\in
Q$ there exists $W^q\in \mathscr{H}$ such that $W^q(q)=V(q)$. For
all $\epsilon >0$ there exists a closed neighbourhood
$\Omega^{q,\epsilon}$ such that \begin{equation*} \|
W^q-V\|_{\infty,\Omega^{q,\epsilon}} \leq \epsilon,\end{equation*} where
$\| \cdot \|_{\infty,\Omega^{q,\epsilon}}$ is the supremum norm
restricted to $\Omega^{q,\epsilon}$, with respect to any fixed Riemannian structure on $Q$. For every $\epsilon>0$, 
$\{\Omega^{q,\epsilon}\}_{q\in Q}$ is an open covering of $Q$.
Let $(Q_n)_{n\in \mathbb{N}}$ be an increasing sequence of compact subsets whose interiors cover $Q$. 
For every $n\in\mathbb{N}$ there exists a finite covering $\Omega^{q_1,1/n},\dots,\Omega^{q_{r_n},1/n}$ of $Q_n$ and  a partition of unity  $a_1,\dots,a_{r_n}$
subordinated to $\{\Omega^{q_i,1/n}\}_{i=1}^{r_n}$ 
such that
\begin{equation*} \left\| \sum_{i=1}^{r_n} a_i W^{q_i}-V\right\|_{\infty,Q_n}
\leq \frac1n. \end{equation*}
In particular, the sequence $(\sum_{i=1}^{r_n} a_i W^{q_i})_{n\in\mathbb{N}}$ is contained in $\mathscr{H}$ and converges uniformly to
$V$ on compact sets. As $\mathscr{H}$ is closed, it
follows that $V\in \mathscr{H}$. \qed
\end{lemma}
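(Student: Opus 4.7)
The plan is to prove the nontrivial inclusion by a partition of unity argument, matching $V$ pointwise by elements of $\mathscr{H}$ and glueing them via nonnegative smooth coefficients so as to stay inside $\mathscr{H}$, then passing to the limit using $\mathcal{C}^0_{\mathrm{loc}}$-closedness.

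First, the inclusion $\mathscr{H}\subseteq\{V\in\mathfrak{X}(Q)\mid V(q)\in\mathscr{H}(q)\ \forall q\}$ is immediate from the definition of $\mathscr{H}(q)$. For the converse, fix $V\in\mathfrak{X}(Q)$ with $V(q)\in\mathscr{H}(q)$ for every $q\in Q$. For each $q$ choose $W^q\in\mathscr{H}$ with $W^q(q)=V(q)$. Since $W^q-V$ is a continuous vector field vanishing at $q$, for every $\eps>0$ there is a closed neighbourhood $\Omega^{q,\eps}$ of $q$ such that $\|W^q-V\|_{\infty,\Omega^{q,\eps}}\le\eps$, where the norm is taken with respect to any fixed Riemannian metric on $Q$.

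Next I would exhaust $Q$ by an increasing sequence of compact sets $(Q_n)_{n\in\N}$ with $Q_n\subseteq\mathrm{int}(Q_{n+1})$ and $\bigcup_n Q_n=Q$. For each $n$, the open cover $\{\mathrm{int}(\Omega^{q,1/n})\}_{q\in Q}$ of $Q_n$ admits a finite subcover by $\Omega^{q_1,1/n},\dots,\Omega^{q_{r_n},1/n}$, and one can find a smooth partition of unity $a_1,\dots,a_{r_n}\in\mathcal{C}^\infty(Q,[0,1])$ subordinate to this cover with $\sum_i a_i\equiv 1$ on $Q_n$. Set $V_n:=\sum_{i=1}^{r_n} a_i W^{q_i}$. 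The key point, and the reason the hypothesis is stated as closure under nonnegative smooth combinations (rather than arbitrary smooth combinations or convex combinations), is that $V_n\in\mathscr{H}$ because all coefficients $a_i$ lie in $\mathcal{C}^\infty(Q,[0,\infty))$.

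Finally, on $Q_n$ one estimates
\[
\|V_n-V\|_{\infty,Q_n}
=\Bigl\|\sum_i a_i(W^{q_i}-V)\Bigr\|_{\infty,Q_n}
\le \sum_i \|a_i\|_\infty\cdot \tfrac{1}{n}\le \tfrac{1}{n},
\]
using that $a_i$ is supported in $\Omega^{q_i,1/n}$ and $\sum_i a_i=1$ on $Q_n$. Any compact subset of $Q$ lies in some $Q_n$ for $n$ large, hence $V_n\to V$ uniformly on compact sets, so by $\mathcal{C}^0_{\mathrm{loc}}$-closedness of $\mathscr{H}$ we conclude $V\in\mathscr{H}$. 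The only subtle step is making sure the glueing keeps us inside $\mathscr{H}$: since $\mathscr{H}$ is only assumed to be closed under nonnegative smooth combinations, one must use a genuine (nonnegative) partition of unity, and the pointwise hypothesis must be strong enough to give local matching by a \emph{single} element of $\mathscr{H}$ at each point, which is what allows the convex-cone structure to suffice.
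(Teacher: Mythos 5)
Your proof is essentially identical to the paper's: same pointwise matching of $V$ by elements $W^q\in\mathscr{H}$, same compact exhaustion, same partition-of-unity glueing exploiting closure under nonnegative smooth combinations, same conclusion via $\mathcal{C}^0_{\mathrm{loc}}$-closedness. One small slip in the displayed estimate: the chain $\|V_n-V\|_{\infty,Q_n}\le\sum_i\|a_i\|_\infty\cdot\tfrac1n\le\tfrac1n$ is not correct as written, since $\sum_i\|a_i\|_\infty$ need not be $\le 1$ (the partition-of-unity identity $\sum_i a_i\equiv1$ holds pointwise, not for the sums of suprema); the right bound is obtained pointwise, $\bigl|\sum_i a_i(q)(W^{q_i}(q)-V(q))\bigr|\le\sum_i a_i(q)\cdot\tfrac1n=\tfrac1n$ for $q\in Q_n$, before taking the supremum.
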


\begin{prop} For every integer $l\geq 0$, let $\mathscr{K}_l$ and
$\widehat{\mathscr{K}}_{l,q}$ be the convex cones defined
in~\eqref{setK} and~\eqref{eq:HatKl} respectively. Then 
\begin{equation}\label{malKrange}
\mathscr{K}_l=\{V\in \mathfrak{X}(Q)\mid V(q)\in \mathscr{K}_l(q)\}
\end{equation}
 and
\begin{equation}
\mathscr{K}_l(q)={\widehat{\mathscr{K}}_{l,q}},\qquad \mbox{for every $q\in Q$}.
\label{eq:propKHatK}
\end{equation}
 \label{prop:MalgrangeCone}
\end{prop}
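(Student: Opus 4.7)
My plan is to prove~\eqref{malKrange} and~\eqref{eq:propKHatK} jointly by induction on $l$, carrying along a third auxiliary property needed in order to invoke Lemma~\ref{lemma:H}: that each $\mathscr{K}_l$ is closed under multiplication by functions in $\mathcal{C}^\infty(Q,[0,\infty))$. The base case $l=0$ is immediate, since $\mathrm{span}_{\mathcal{C}^\infty(Q)}\mathscr{Y}$ evaluated at any $q$ collapses to the finite-dimensional (hence closed) subspace $\mathrm{span}_{\R}\mathscr{Y}(q)$, giving $\mathscr{K}_0(q)=\widehat{\mathscr{K}}_{0,q}$; the module property is clear; and Lemma~\ref{lemma:H} yields~\eqref{malKrange}.

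In the inductive step the first task is the auxiliary closure for $\mathscr{K}_l$. The subtle point is that the convex hull in its recursive definition is taken with \emph{real} coefficients, so multiplying a generator $-\langle Z:Z\rangle$ by an arbitrary nonneg smooth $f$ is not syntactically allowed. I would use the identity
\[
\langle gZ:gZ\rangle = g^2\langle Z:Z\rangle + 2g(Zg)Z,
\]
valid for any smooth $g$, applied with $g=\sqrt{f+\eps}$ (smooth since $f+\eps>0$): the inductive closure of $\mathscr{K}_{l-1}$ makes $\mathrm{L}(\mathscr{K}_{l-1})$ into a $\mathcal{C}^\infty(Q)$-module, hence $gZ\in\mathrm{L}(\mathscr{K}_{l-1})$ and
\[
-(f+\eps)\langle Z:Z\rangle\in\mathscr{K}_{l-1}-\mathrm{co}\{\langle W:W\rangle\mid W\in\mathrm{L}(\mathscr{K}_{l-1})\}\subseteq\mathscr{K}_l.
\]
Sending $\eps\to 0^+$ inside the closed set $\mathscr{K}_l$ gives the same for $f$; it follows that $\mathscr{K}_l$ coincides with the closure of $\mathscr{K}_{l-1}-\mathrm{co}^\infty\{\langle Z:Z\rangle\mid Z\in\mathrm{L}(\mathscr{K}_{l-1})\}$, the smooth-function analogue of the convex hull, which is visibly closed under nonneg smooth multiplication. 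Hence so is $\mathscr{K}_l$, and Lemma~\ref{lemma:H} delivers~\eqref{malKrange} for $l$.

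For~\eqref{eq:propKHatK} at level $l$, the inductive hypotheses give the identification
\[
\mathrm{L}(\mathscr{K}_{l-1})=\bigl\{Z\in\mathfrak{X}(Q):Z(q)\in\mathrm{L}(\widehat{\mathscr{K}}_{l-1,q})\;\forall q\in Q\bigr\},
\]
so that writing $A:=\mathscr{K}_{l-1}-\mathrm{co}\{\langle Z:Z\rangle\mid Z\in\mathrm{L}(\mathscr{K}_{l-1})\}$, one checks directly that $A(q)$ is precisely the set whose closure in $T_qQ$ is $\widehat{\mathscr{K}}_{l,q}$. Since $\mathscr{K}_l=\overline{A}$, continuity of evaluation gives the easy inclusion $\mathscr{K}_l(q)=\overline{A}(q)\subseteq\overline{A(q)}=\widehat{\mathscr{K}}_{l,q}$. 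For the reverse inclusion, every $v_n\in A(q)$ is manifestly realized as $V_n(q)$ for some $V_n\in A\subseteq\mathscr{K}_l$ (lift the $\mathscr{K}_{l-1}(q)$-component using the inductive~\eqref{malKrange}); the task is then to produce a single $V\in\mathscr{K}_l$ with $V(q)=v$ for an arbitrary $v=\lim v_n\in\overline{A(q)}$. I expect this to be the main obstacle of the proof: it requires a patching argument combining smooth cutoffs $\chi_n$ peaked at $q$ (so that $\chi_nV_n\in\mathscr{K}_l$ still takes value $v_n$ at $q$, using the freshly established nonneg-smooth closure) with a partition-of-unity assembly in the spirit of Lemma~\ref{lemma:H}, the delicate step being to balance the supports of the $\chi_n$ against the lack of uniform control on the $V_n$ away from $q$ so that the assembly converges in $\mathcal{C}^0_{\mathrm{loc}}$.
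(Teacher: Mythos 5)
Your treatment of \eqref{malKrange} is essentially the paper's: show by induction that $\mathscr{K}_l$ is stable under multiplication by functions in $\mathcal{C}^\infty(Q,[0,\infty))$, exploiting the identity $\langle gZ\colon gZ\rangle=g^2\langle Z\colon Z\rangle+2g(Zg)Z$ together with Lemma~\ref{lemma:H} applied to ${\rm L}(\mathscr{K}_{l-1})$, and then invoke Lemma~\ref{lemma:H} once more. You regularize via $g=\sqrt{f+\eps}$ and let $\eps\to0^+$ in the closed cone $\mathscr{K}_l$, whereas the paper instead uses the $\mathcal{C}^0_{\rm loc}$-density of $\{a^2\mid a\in\mathcal{C}^\infty(Q,\R)\}$ in $\mathcal{C}^\infty(Q,[0,\infty))$ combined with the density of $\mathscr{K}_{l-1}-{\rm co}\{\langle Z\colon Z\rangle\mid Z\in{\rm L}(\mathscr{K}_{l-1})\}$ in $\mathscr{K}_l$; both variants are fine. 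Your reduction for \eqref{eq:propKHatK} — using \eqref{LK} plus the inductive hypothesis to identify $\widehat{\mathscr{K}}_{l,q}$ with $\overline{A(q)}$ for $A=\mathscr{K}_{l-1}-{\rm co}\{\langle Z\colon Z\rangle\mid Z\in{\rm L}(\mathscr{K}_{l-1})\}$, and noting the easy inclusion $\mathscr{K}_l(q)=\overline{A}(q)\subseteq\overline{A(q)}$ — also mirrors the paper.

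However, what you submit is not a proof of the remaining inclusion $\overline{A(q)}\subseteq\mathscr{K}_l(q)$; you explicitly leave it as an expectation that a cutoff/partition-of-unity patching "in the spirit of Lemma~\ref{lemma:H}" will work, without carrying it out, and the sketch as written has a concrete obstruction you do not mention: if you assemble $V$ from pieces $\chi_nV_n$ with supports shrinking to $\{q\}$, the resulting limit need only be $\mathcal{C}^0_{\rm loc}$, while membership in $\mathscr{K}_l\subseteq\mathfrak{X}(Q)$ requires smoothness, and there is no uniform control of the derivatives of the $V_n$ near $q$. What is really needed is that $\mathscr{K}_l(q)$ be closed in $T_qQ$, and that has to be argued (or the commutation $\overline{A}(q)=\overline{A(q)}$ established by some other means). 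Be aware that the paper itself is terse here — it simply says the displayed identity "gives the result when compared with \eqref{setK}" — so you have in fact put your finger on the one step the published proof does not spell out; but flagging a difficulty is not the same as resolving it, and as it stands your argument for \eqref{eq:propKHatK} is incomplete.
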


\proof 
We first prove \eqref{malKrange} by induction on $l$. 
According to Lemma~\ref{lemma:H}, it is enough to prove that if $V\in\mathscr{K}_l$ and $a\in \mathcal{C}^\infty(Q,[0,\infty))$, then $aV\in \mathscr{K}_l$. The step $l=0$ is trivial. Let $l\geq 1$ and assume that 
the property is true for $l-1$.  
Since $\mathscr{K}_{l-1}-{\rm co}\left\{\langle Z
\colon Z \rangle \mid Z\in {\rm L}(\mathscr{K}_{l-1})\right\} $ is  ${\mathcal C}^0_{{\rm loc}}$-dense in $\mathscr{K}_l$ and 
$\{a^2\mid a\in \mathcal{C}^\infty(Q,\mathbb{R})\}$ is ${\mathcal C}^0_{{\rm loc}}$-dense in $\mathcal{C}^\infty(Q,[0,\infty))$, it is enough to prove that 
$a^2 V\in \mathscr{K}_l$ for every $V\in\mathscr{K}_{l-1}-{\rm co}\left\{\langle Z
\colon Z \rangle \mid Z\in {\rm L}(\mathscr{K}_{l-1})\right\} $ and $a\in \mathcal{C}^\infty(Q,\R)$.
Write $V=W-\sum_{j=1}^J \lambda_j\langle Z_j \colon Z_j\rangle$ with $W\in \mathscr{K}_{l-1}$, $\lambda_1,\dots,\lambda_J> 0$ with $\sum_{j=1}^J \lambda_j=1$ and $Z_1,\dots,Z_J\in {\rm L}(\mathscr{K}_{l-1})$. 
By induction hypothesis $a^2 W\in \mathscr{K}_{l-1}$. Moreover, ${\rm L}(\mathscr{K}_{l-1})$ is also a ${\mathcal C}^0_{{\rm loc}}$-closed set of $\mathfrak{X}(Q)$, 
closed with respect to finite linear combinations with coefficients in $\mathcal{C}^\infty(Q,[0,\infty))$. Hence, 
applying Lemma~\ref{lemma:H} to $\mathscr{H}={\rm L}(\mathscr{K}_{l-1})$ we deduce that $aZ_1,\dots,a Z_J$ belong to ${\rm L}(\mathscr{K}_{l-1})$. It can be easily proved using \eqref{triple_bracket} that 
$$\langle a Z_j\colon a Z_j\rangle= a^2 \langle Z_j\colon Z_j\rangle +b_j Z_j$$
for some smooth function $b_j$. By induction hypothesis, $b_j Z_j$ is in $\mathscr{K}_{l-1}$. Hence, $-a^2 \langle Z_j\colon Z_j\rangle$ lies in 
$\mathscr{K}_{l-1}-{\rm co}\left\{\langle Z
\colon Z \rangle \mid Z\in {\rm L}(\mathscr{K}_{l-1})\right\} $, concluding the proof of the identity 
$\mathscr{K}_l=\{V\in \mathfrak{X}(Q)\mid V(q)\in \mathscr{K}_l(q)\}$.

As a consequence, if $Z(q)\in
{\rm L}({\mathscr{K}}_{l}(q))$ for all $q\in Q$, then $Z\in L({\mathscr{K}}_{l})$, which implies that
\begin{equation}\label{LK}
{\rm L}(\mathscr{K}_{l}) =\{Z \in \mathfrak{X}(Q)\mid Z(q)\in
{\rm L}({\mathscr{K}}_{l}(q)) \quad \forall \; q\in Q\}.
\end{equation}

Let us now prove, again  by induction on $l$,
that 
 \eqref{eq:propKHatK} is true. 
The case  $l=0$ is trivial. Let us assume that 
\eqref{eq:propKHatK} 
 holds 
 for $l-1$,
and let us prove it for $l$. 
According to \eqref{LK} and the induction hypothesis, 
$$
{\rm L}(\mathscr{K}_{l-1}) =\{Z \in \mathfrak{X}(Q)\mid Z(q)\in
{\rm L}(\widehat{\mathscr{K}}_{l-1,q}) \quad \forall \; q\in Q\}.
$$
The definition of $\widehat{\mathscr{K}}_{l,q}$ then gives
$$\widehat{\mathscr{K}}_{l,q}=\overline{\mathscr{K}_{l-1}(q)-
{\rm
co}\left\{\langle Z \colon Z \rangle(q) \mid Z\in {\rm L}({\mathscr{K}}_{l-1})\right\}}$$
which gives the result when compared with 
\eqref{setK}. 
 \qed

\subsection{Kinematic reduction}\label{s-kinred}

It is already known in the literature that to perform certain motion planning  tasks it is useful to reduce a mechanical control system to a control-linear system 
in such a way that there exist relationships between the trajectories of both control systems. Before proceeding, we introduce some necessary definitions.

Let  $\mathscr{X}=\{X_1,\dots,X_m\}\subset \mathfrak{X}(Q)$ and consider the control-linear system  $(Q,\mathscr{X},\R^m)$ (defined in Section~\ref{control-linear}). 
Let us introduce the notations 
\begin{eqnarray*}
{\rm Sym}^{(0)}(\mathscr{Y})_q&=&{\rm span}_{\mathbb{R}}\mathscr{Y}(q),\\
{\rm Sym}^{(1)}(\mathscr{Y})_q&=&{\rm Sym}^{(0)}(\mathscr{Y})_q+{\rm span}_{\mathbb{R}}\{\langle W\colon Z \rangle (q) \mid W, Z \in \mathscr{Y}\}.
\end{eqnarray*}

\begin{defn}\label{Defn:KinRed} Let $\Sigma=(Q,\nabla,0, \mathscr{Y},\R^k)$ be a FACCS.
A driftless kinematic system $\Sigma_{\rm kin}=(Q,\mathscr{X},\mathbb{R}^{{m}})$ is a \textbf{kinematic reduction of $\mathbf{\Sigma}$} if
for every controlled trajectory $(\gamma,u_{\rm kin})$ of $\Sigma_{\rm kin}$ 
with $u_{\rm kin}$ smooth 
there exists $u$ smooth such that $(\gamma,u)$ is a controlled trajectory for $\Sigma$.
\end{defn}

Let us recall the following result from \cite{2005BulloAndrewBook}.

\begin{thm}[{\cite[Theorem 8.18]{2005BulloAndrewBook}}] \label{thm-KR}
Let $\Sigma$ and $\Sigma_{\rm kin}$ be as in Definition~\ref{Defn:KinRed}.
Assume that $\mathscr{X}$ and $\mathscr{Y}$ generate constant-rank distributions.
Then $\Sigma_{\rm kin}$ is a kinematic reduction of $\Sigma$ if and only if  ${\rm Sym}^{(1)}\mathscr{X}_q\subset \mathrm{span}_{\R}\mathscr{Y}(q)$ for every $q\in Q$.
\end{thm}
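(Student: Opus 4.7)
The plan is to unpack what it means for a curve $\gamma$ to be a trajectory of $\Sigma_{\rm kin}$, carry the velocity $\dot\gamma=\sum_a u_{{\rm kin},a}(X_a\circ\gamma)$ through the connection, and match coefficients against the $Y_a$'s on both directions of the equivalence. Differentiating, one computes
\[
\nabla_{\dot\gamma(t)}\dot\gamma(t)
= \sum_{a=1}^m \dot u_{{\rm kin},a}(t)\,X_a(\gamma(t))
+\tfrac12\sum_{a,b=1}^m u_{{\rm kin},a}(t)u_{{\rm kin},b}(t)\,\langle X_a:X_b\rangle(\gamma(t)),
\]
after symmetrizing $\sum_{a,b} u_a u_b\nabla_{X_a}X_b$. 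The right-hand side takes values in ${\rm Sym}^{(1)}(\mathscr{X})_{\gamma(t)}$ pointwise, and is driven by the linear part (the $\dot u_{{\rm kin},a}$ terms producing elements of ${\rm Sym}^{(0)}$) and the quadratic part (producing the symmetric-product part).

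For the "if" direction, assume ${\rm Sym}^{(1)}(\mathscr{X})_q\subset \mathrm{span}_{\R}\mathscr{Y}(q)$ for every $q$. Then the vector field along $\gamma$ displayed above takes values in the constant-rank distribution $\mathrm{span}_{\R}\mathscr{Y}$, and because $\mathscr{Y}$ has constant rank one can, at least locally along $\gamma$, pick a smooth basis of $\mathscr{Y}$ (using a partition of unity to glue local choices) and smoothly solve for $u(t)\in\R^k$ realizing the equality $\nabla_{\dot\gamma}\dot\gamma=\sum_a u_a Y_a\circ\gamma$. This produces the required smooth $u$, showing $(\gamma,u)$ is a trajectory of $\Sigma$.

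For the "only if" direction, fix $q\in Q$ and exploit the freedom to prescribe $(u_{{\rm kin},a}(0),\dot u_{{\rm kin},a}(0))$ freely through the unique trajectory issuing from $q$ with those initial data. Choosing $u_{{\rm kin},a}(t)\equiv c_a$ constant gives $\dot\gamma(0)=\sum_a c_a X_a(q)$ and forces $\tfrac12\sum_{a,b} c_a c_b\,\langle X_a:X_b\rangle(q)\in\mathrm{span}_\R\mathscr{Y}(q)$; varying the $c_a$'s and polarizing delivers $\langle X_a:X_b\rangle(q)\in\mathrm{span}_\R\mathscr{Y}(q)$ for every $a,b$. Choosing instead $u_{{\rm kin},a}(t)=t\delta_{ab}$ so that $\dot\gamma(0)=0$ but $\dot u_{{\rm kin},b}(0)=1$ isolates the linear term, yielding $X_b(q)\in\mathrm{span}_\R\mathscr{Y}(q)$. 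Together these give ${\rm Sym}^{(1)}(\mathscr{X})_q\subset\mathrm{span}_\R\mathscr{Y}(q)$.

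The main obstacle I expect is the smoothness issue in the "if" part: producing a globally smooth $u$ realizing a section of $\mathrm{span}_\R\mathscr{Y}$ along $\gamma$. This is where the constant-rank hypothesis on $\mathscr{Y}$ is essential, letting one pick smooth local frames and patch via a partition of unity (one can even pick $u_a=0$ for indices outside a local basis, so the patching is routine). The constant-rank hypothesis on $\mathscr{X}$ ensures that the pointwise condition ${\rm Sym}^{(1)}(\mathscr{X})_q\subset \mathrm{span}_\R\mathscr{Y}(q)$ is well-posed independently of the specific generating set; the polarization and isolation arguments in the "only if" direction are elementary once one recognizes that $c\in\R^m$ can be chosen freely and the map $c\mapsto \tfrac12\sum c_a c_b\langle X_a:X_b\rangle(q)$ is a symmetric bilinear form evaluated on the diagonal.
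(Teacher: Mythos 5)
The paper does not supply a proof of this theorem: it is recalled verbatim from Bullo and Lewis \cite[Theorem 8.18]{2005BulloAndrewBook} as a black box, so there is no in-paper argument to compare yours against. That said, your reconstruction is correct and follows the standard textbook route. The central computation
\[
\nabla_{\dot\gamma}\dot\gamma
=\sum_a \dot u_{{\rm kin},a}\,X_a\circ\gamma
+\tfrac12\sum_{a,b}u_{{\rm kin},a}u_{{\rm kin},b}\,\langle X_a:X_b\rangle\circ\gamma
\]
is right (symmetrizing $\sum u_au_b\nabla_{X_a}X_b$ is the key step), and both directions of the equivalence are handled correctly: for the \emph{if} part, the constant-rank hypothesis on $\mathscr{Y}$ lets you pick local smooth subframes and glue the coefficients over the compact interval $I$ by a partition of unity, which is legitimate precisely because the equation to be solved is linear in $u$; for the \emph{only if} part, evaluating at $t=0$ with constant controls $u_{\rm kin}\equiv c$ and polarizing the resulting quadratic form gives $\langle X_a:X_b\rangle(q)\in\mathrm{span}_\R\mathscr{Y}(q)$, while the choice $u_{{\rm kin},a}(t)=t\delta_{ab}$ kills the quadratic term at $t=0$ and isolates $X_b(q)\in\mathrm{span}_\R\mathscr{Y}(q)$. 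Together these give exactly $\mathrm{Sym}^{(1)}(\mathscr{X})_q\subset\mathrm{span}_\R\mathscr{Y}(q)$ in the paper's notation. The only point worth tightening is your remark about the role of constant rank of $\mathscr{X}$: the inclusion $\mathrm{Sym}^{(1)}(\mathscr{X})_q\subset\mathrm{span}_\R\mathscr{Y}(q)$ is a condition on the finite set $\mathscr{X}$ rather than merely on the distribution it spans, and in fact the constant-rank assumption on $\mathscr{X}$ is what makes this condition depend only on the distribution (modulo terms already in $\mathrm{span}_\R\mathscr{X}$), so the phrasing ``well-posed independently of the generating set'' is heuristically right but could be stated more carefully.
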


\subsection{A new criterion for trackability}\label{SNewTrack}

Let us now generalize 
the sufficient conditions
for tracking given in Proposition~\ref{prop:track-Z}.

\begin{thm}\label{Thm:ACCStrackZ}
Let $\Sigma=(Q,\nabla,Y, \mathscr{Y},\R^k)$ be a FACCS.
Define the families $\mathscr{Z}_i$, $i\in \N$, of
vector fields on $Q$ as in \eqref{setZ}.
Assume that
there exists $l\in \mathbb{N}$ such that 
\begin{enumerate}
\item $Y(t,p)\in \mathrm{span}_{\R}\mathscr{Z}_l(\tau_Q(p))$ for every $p\in TQ$;
\item the distributions $\mathrm{span}_{\R}\mathscr{Z}_{l-1}$, $\mathrm{span}_{\R}\mathscr{Z}_l$, and ${\rm Lie}\left(\mathscr{Z}_{l-1}\right)$ have constant rank; 
\item \label{Zbuona} for all $i\in\{0, \ldots, l-1\}$ and $Z\in \mathscr{Z}_i$,
$\langle Z\colon Z\rangle\in {\rm
span}_{\mathcal{C}^\infty(Q)}\mathscr{Z}_i$.
\end{enumerate}

Fix a smooth reference trajectory $\gref:I\rightarrow Q$ of class
${\mathcal C}^\infty$. 
If 
$\dgref(t)\in {\rm Lie}_{\gref(t)}\left(\mathscr{Z}_{l-1}\right)$
for every $t\in I$,  then $\gref$
is trackable. 
In particular, if 
${\rm Lie}
_q\left(\mathscr{Z}_{l-1}\right)=T_q Q$ for every $q\in Q$ then the \CTP\ holds.
\proof 
Let $l$ be as in the statement of the theorem and consider the FACCS 
$$\Sigma_l=(Q,\nabla,0, 
\mathscr{Z}_l,\mathbb{R}^{{m_l}}),$$
where $m_l$ is the cardinality of 
$\mathscr{Z}_l$.

As recalled in Theorem~\ref{thm-KR}, 
$$\Sigma_{l-1,\rm kin}=(Q,
\mathscr{Z}_{l-1},\mathbb{R}^{{m_{l-1}}})$$
is 
 a kinematic reduction of $\Sigma_l$,  
where $m_{l-1}$ is the cardinality of 
$\mathscr{Z}_{l-1}$, since ${\rm Sym}^{(1)}(\mathscr{Z}_{l-1})_q=\mathrm{span}_{\R}\mathscr{Z}_{l}(q)$ for every $q\in Q$.
Hence, every controlled trajectory 
of $\Sigma_{l-1,\rm kin}$ 
is also a 
controlled trajectory of $\Sigma_l$.

Since $\gref$ is tangent to the distribution $\mathrm{Lie}(\mathscr{Z}_{l-1})$, 
we deduce from Corollary~\ref{tracking-control-linear} that $\gref$ can be tracked with arbitrary precision by trajectories of $\Sigma_{l-1,\rm kin}$.

Hence, given
a positive tolerance $\epsilon$,  
there exists a 
controlled trajectory $(\gamma_1,u_1)$ of $\Sigma_{l-1,\rm kin}$ (still defined on the time-interval $I$) with $\gamma_1(0)=\gref(0)$,  $u_1:I\to \mathbb{R}^{m_{l-1}}$ smooth, and
such that 
$${\rm d} (\gref(t), \gamma_1(t))<\epsilon/2$$
for every time $t\in I$.

Now, by kinematic reduction,  
there exists $u_2:I\to \mathbb{R}^{m_l}$ smooth such that $(\gamma_1,u_2)$ is a controlled trajectory of $\Sigma_l$. 
Since the distribution generated by $\mathscr{Z}_l$ has constant rank, we can  represent $Y(t,\gamma_1(t))$ as a linear combination of $Z_1(\gamma_1(t)),\dots,Z_{m_l}(\gamma_1(t))$ with 
coefficients
depending smoothly on the time.  We recover that 
$$\nabla_{\dot\gamma_1(t)}\dot\gamma_1(t)-Y(t,\gamma_1(t))=\sum_{a=1}^{m_l} \lambda_a(t)Z_a(\gamma_1(t)),\qquad \forall t\in I, $$
with  $\lambda_1,\dots,\lambda_{m_l}\in C^{\infty}(I,\R)$.

Applying now Proposition~\ref{prop:track-Z}, we have that 
$\gamma_1$ is strongly trackable for $\Sigma$, and in particular there exists $u_3:I\to \mathbb{R}^k$ such that 
the trajectory $\gamma_3$ of $\Sigma$ corresponding to $u_3$ and with initial condition $\dot \gamma_3(0)=\dot\gamma_1(0)$ satisfies 
$${\rm d} (\gamma_1(t),\gamma_3(t))<\epsilon/2$$
 for every $t\in I$. 
We then conclude that $\gref$ is trackable for $\Sigma$.
\qed
\end{thm}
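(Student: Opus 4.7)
The plan is to leverage both Proposition~\ref{prop:track-Z} and the kinematic reduction machinery of Theorem~\ref{thm-KR}, using the control-linear tracking result Corollary~\ref{tracking-control-linear} as a bridge. The obstruction to applying Proposition~\ref{prop:track-Z} directly to $\gref$ is that we only know $\dgref$ takes values in the (generally larger) distribution $\mathrm{Lie}(\mathscr{Z}_{l-1})$, not pointwise in $\mathrm{span}_{\R}\mathscr{Z}_{l}$. The idea is therefore to replace $\gref$ by a nearby smooth curve $\gamma_1$ that is manifestly a trajectory of the control-linear system generated by $\mathscr{Z}_{l-1}$, and then apply Proposition~\ref{prop:track-Z} to $\gamma_1$ instead.

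First, I would introduce the auxiliary driftless kinematic system $\Sigma_{l-1,\mathrm{kin}}=(Q,\mathscr{Z}_{l-1},\R^{m_{l-1}})$ together with the unforced FACCS $\Sigma_l=(Q,\nabla,0,\mathscr{Z}_l,\R^{m_l})$, where $m_{l-1},m_l$ are the cardinalities of $\mathscr{Z}_{l-1}$ and $\mathscr{Z}_l$. Because $\mathscr{Z}_l$ contains by definition every symmetric product $\langle Z_a\colon Z_b\rangle$ with $Z_a,Z_b\in\mathscr{Z}_{l-1}$, the inclusion $\mathrm{Sym}^{(1)}(\mathscr{Z}_{l-1})_q\subset\mathrm{span}_{\R}\mathscr{Z}_l(q)$ is automatic, and the constant-rank hypotheses on $\mathrm{span}_{\R}\mathscr{Z}_{l-1}$ and $\mathrm{span}_{\R}\mathscr{Z}_l$ let Theorem~\ref{thm-KR} identify $\Sigma_{l-1,\mathrm{kin}}$ as a kinematic reduction of $\Sigma_l$.

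Since $\dgref$ is tangent to the constant-rank distribution $\mathrm{Lie}(\mathscr{Z}_{l-1})$, Corollary~\ref{tracking-control-linear} supplies, for any $\epsilon>0$, a smooth trajectory $\gamma_1$ of $\Sigma_{l-1,\mathrm{kin}}$ with $\gamma_1(0)=\gref(0)$ and $\mathrm{d}(\gref(t),\gamma_1(t))<\epsilon/2$ on $I$. By kinematic reduction, $\gamma_1$ is also a trajectory of $\Sigma_l$, so $\nabla_{\dot\gamma_1}\dot\gamma_1$ lies pointwise in $\mathrm{span}_{\R}\mathscr{Z}_l$; hypothesis~(1) gives the same for $Y(t,\dot\gamma_1(t))$, and the constant-rank property of $\mathrm{span}_{\R}\mathscr{Z}_l$ lets me extract smooth coefficients $\lambda_1,\dots,\lambda_{m_l}\in C^\infty(I,\R)$ with
\[
\nabla_{\dot\gamma_1(t)}\dot\gamma_1(t)-Y(t,\dot\gamma_1(t))=\sum_{a=1}^{m_l}\lambda_a(t)\,Z_a(\gamma_1(t)).
\]
Applying Proposition~\ref{prop:track-Z} to $\gamma_1$ as reference trajectory (its symmetric-product hypothesis is exactly condition~(\ref{Zbuona})) then yields a genuine solution $\gamma_3$ of $\Sigma$ with $\mathrm{d}(\gamma_1(t),\gamma_3(t))<\epsilon/2$, and the triangle inequality concludes the proof.

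The main delicate point I anticipate is justifying the smoothness in $t$ of the coefficients $\lambda_a$: a pointwise decomposition of $Y(t,\dot\gamma_1(t))$ in the family $\mathscr{Z}_l$ need not depend smoothly on time in general, and this is precisely where constant rank of $\mathrm{span}_{\R}\mathscr{Z}_l$ is essential, since it provides a smooth local frame so that the decomposition coefficients solve a well-posed linear-algebraic system with smooth data. Once this smoothness is in hand, the argument reduces to correctly chaining Corollary~\ref{tracking-control-linear}, Theorem~\ref{thm-KR}, and Proposition~\ref{prop:track-Z}.
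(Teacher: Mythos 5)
Your proof is correct and follows essentially the same route as the paper: introduce $\Sigma_l$ and $\Sigma_{l-1,\rm kin}$, invoke Theorem~\ref{thm-KR} to get the kinematic reduction, use Corollary~\ref{tracking-control-linear} to obtain an intermediate smooth trajectory $\gamma_1$ of the driftless system close to $\gref$, lift it to $\Sigma_l$, extract smooth coefficients via the constant-rank hypothesis, apply Proposition~\ref{prop:track-Z} to $\gamma_1$, and finish with the triangle inequality. The only small difference is cosmetic (you write $Y(t,\dot\gamma_1(t))$ where the paper writes $Y(t,\gamma_1(t))$, and you emphasize that only the inclusion $\mathrm{Sym}^{(1)}(\mathscr{Z}_{l-1})_q\subset\mathrm{span}_{\R}\mathscr{Z}_l(q)$ is required rather than equality), neither of which changes the substance.
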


In order to generalize the argument to situations in which the hypothesis that
$\langle Z\colon Z\rangle\in {\rm
span}_{\mathcal{C}^\infty(Q)}\mathscr{Z}_i$ for every $Z\in \mathscr{Z}_i$ cannot be assumed, we 
introduce in the theorem below a new requirement on the linearity of the cones $\mathscr{K}_i$.

\begin{thm}\label{Thm:ACCStrackK}
Let $\Sigma=(Q,\nabla,Y, \mathscr{Y})$ be a FACCS.
Define the families $\K_i$, $i\in \N$, of
vector fields on $Q$ as in \eqref{setK}.
Assume that
there exists $l\in \mathbb{N}$ such that 
\begin{itemize}
\item $Y(t,p)\in \K_l(\tau_Q(p))$ for every $p\in TQ$;
\item for all $q\in Q$, ${\rm L}(\K_{l-1}(q))=\K_{l-1}(q)$ and ${\rm L}(\K_{l}(q))=\K_{l}(q)$;
\item  the distributions $\K_{l-1}$ and $\K_l$ and ${\rm Lie}\left(\K_{l-1}\right)$ 
have constant rank.
\end{itemize}

Fix a smooth reference trajectory $\gref:I\rightarrow Q$.
If  $\dgref(t)\in {\rm Lie}_{\gref(t)}\left(\K_{l-1}\right)$
for every $t\in I$,  then $\gref$
is trackable. 

In particular, if ${\rm Lie}_q\left(\K_{l-1}\right)=T_q Q$ for every $q\in Q$ then the \CTP\ holds.

\proof
The reasoning works similarly to the one used in the proof of Theorem~\ref{Thm:ACCStrackZ}. The first step is then to check that  ${\rm Sym}^{(1)}(\K_{l-1})\subseteq \K_l$, allowing  kinematic reduction arguments.
By definition of $\K_l$ (see~\eqref{setK}), we know that $-\langle Z\colon Z \rangle $ lies in $\K_l$ for each $Z\in {\rm L}(\K_{l-1})$. 
Moreover, we deduce from \eqref{malKrange} in Proposition~\ref{prop:MalgrangeCone} and the hypothesis ${\rm L}(\K_{j}(q))=\K_{j}(q)$ for $q$ in $Q$ and $j=l-1,l$, 
that ${\rm L}(\K_{j})=\K_{j}$ for $j=l-1,l$. 
Hence, $\pm \langle Z\colon Z \rangle $ lies in $\K_l$ for every $Z$ in $\K_{l-1}$. As the symmetric product of any vector field can be written as a linear combination of symmetric products of vector fields with themselves, 
\begin{equation*}
\langle Z\colon W \rangle=\dfrac{1}{2}\left( \langle Z+W\colon Z+W \rangle-\langle Z\colon Z \rangle-\langle W\colon W \rangle\right),
\end{equation*}
we conclude  that ${\rm Sym}^{(1)}(\K_{l-1})\subseteq \K_l$.

Let $\mathscr{V}=\{V_1,\dots, V_{m}\}$ be a set of generators of the distribution $q\mapsto \mathscr{K}_{l-1}(q)$ along $\gref$, i.e., 
$\mathscr{K}_{l-1}(\gref(t))=\mathrm{span}\{V_1(\gref(t)),\dots, V_{m}(\gref(t))\}$ for every $t\in I$.
It follows from Corollary~\ref{tracking-control-linear} that 
 the trajectories of $\Sigma_{l-1,\rm kin}=(Q,\mathscr{V},\mathbb{R}^{m})$ can track 
 $\gref$ with arbitrary precision. 
 Hence, given
a positive tolerance $\epsilon$,  
there exists a
controlled trajectory $(\gamma_1,u_1)$ of $\Sigma_{l-1,\rm kin}$ (still defined on the time-interval $I$) with $\gamma_1(0)=\gref(0)$ 
such that $u_1$ is smooth and 
$${\rm d} (\gref(t), \gamma_1(t))<\epsilon/2$$
for every time $t\in I$. 

Let $\mathscr{U}=\{U_1,\dots, U_{r}\}$ be a set of generators 
 of $\K_l$ along $\gamma_1$. 
Since 
$${\rm Sym}^{(1)}(\mathscr{V})_q\subseteq {\rm Sym}^{(1)}(\K_{l-1})_q\subseteq \K_l(q)=\mathrm{span}_{\R}\{U_1(q),\dots, U_{r}(q)\}$$
in a neighbourhood of the curve $\gref$,  we deduce from 
Theorem~\ref{thm-KR} that there exists $u_2:I\to \mathbb{R}^{r}$
such that $(\gamma_1,u_2)$ is a controlled trajectory of $\Sigma_l=(Q,\nabla, 0, \mathscr{U},\mathbb{R}^{r})$. 

Hence 
$$\nabla_{\dot\gamma_1(t)}\dot\gamma_1(t)=\sum_{a=1}^r \eta^+_a(t)U_a(\gamma_1(t))+\sum_{a=1}^r \eta^-_a(t)(-U_a(\gamma_1(t))),\qquad \forall \; t\in I, $$
with  $\eta^+_1,\dots,\eta^+_r,\eta^-_1,\dots,\eta^-_r\in \mathcal{C}^{\infty}(I,[0,+\infty))$.

Since $\K_l$ has constant rank, we can  represent $Y(t,\dot{\gamma}_1(t))$ as a linear combination of $U_1(\gamma_1(t))$, $\dots,U_r(\gamma_1(t))$ with coefficients
depending smoothly on the time.  We recover that 
$$\nabla_{\dot\gamma_1(t)}\dot\gamma_1(t)-Y(t,\dot{\gamma}_1(t))=\sum_{a=1}^r \lambda^+_a(t)U_a(\gamma_1(t))+\sum_{a=1}^r \lambda^-_a(t)(-U_a(\gamma_1(t))),\qquad \forall \; t\in I, $$
with  $\lambda^+_1,\dots,\lambda^+_r,\lambda^-_1,\dots,\lambda^-_r\in C^{\infty}(I,[0,+\infty))$.

Applying now Proposition~\ref{thm:oldtrackK}, we have that 
$\gamma_1$ is strongly trackable for $\Sigma$, and in particular there exists $u_3:I\to \mathbb{R}^k$ such that 
the trajectory $\gamma_3$ of $\Sigma$ corresponding to $u_3$ and with initial condition $\dot \gamma_3(0)=\dot\gamma_1(0)$ satisfies 
$${\rm d} (\gamma_1(t),\gamma_3(t))<\epsilon/2$$
 for every $t\in I$, and 
 we conclude that $\gref$ is trackable for $\Sigma$.
\qed

\end{thm}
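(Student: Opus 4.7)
The plan is to mirror the four-step architecture used for Theorem~\ref{Thm:ACCStrackZ}, replacing the families $\mathscr{Z}_i$ by the closed convex cones $\mathscr{K}_i$ and using Proposition~\ref{prop:MalgrangeCone} to pass back and forth between the pointwise and sectionwise descriptions. Given a tolerance $\epsilon>0$, I will produce three curves $\gref \leadsto \gamma_1 \leadsto \gamma_3$ with $\gamma_3$ a trajectory of $\Sigma$ satisfying $\mathrm{d}(\gref(t),\gamma_3(t))<\epsilon$ on $I$.

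The first and main step is to check the kinematic-reduction compatibility ${\rm Sym}^{(1)}(\mathscr{K}_{l-1})\subseteq \mathscr{K}_l$. By the very definition~\eqref{setK} we only know a priori that $-\langle Z\colon Z\rangle\in\mathscr{K}_l$ for $Z\in{\rm L}(\mathscr{K}_{l-1})$. Here is where the new linearity hypotheses enter: combining ${\rm L}(\mathscr{K}_j(q))=\mathscr{K}_j(q)$ for $j=l-1,l$ with the Malgrange-type identity~\eqref{malKrange} from Proposition~\ref{prop:MalgrangeCone} yields ${\rm L}(\mathscr{K}_j)=\mathscr{K}_j$ at the level of sections, so that both $\pm\langle Z\colon Z\rangle$ belong to $\mathscr{K}_l$ for every $Z\in\mathscr{K}_{l-1}$. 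The polarization identity
\begin{equation*}
\langle Z\colon W\rangle=\tfrac12\bigl(\langle Z+W\colon Z+W\rangle-\langle Z\colon Z\rangle-\langle W\colon W\rangle\bigr)
\end{equation*}
then extends this to arbitrary cross symmetric products, giving the inclusion. This is the step I expect to be the main obstacle: the pointwise-to-sectionwise promotion depends critically on Proposition~\ref{prop:MalgrangeCone}, and without the $\mathcal{C}^\infty(Q,[0,\infty))$-convexity used in Lemma~\ref{lemma:H} one could not convert the assumption on $\mathrm{L}(\mathscr{K}_j(q))$ into a statement about vector fields.

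The second step uses the constant-rank hypothesis on $\mathscr{K}_{l-1}$ to pick a finite set $\mathscr{V}=\{V_1,\dots,V_m\}$ of smooth vector fields generating $\mathscr{K}_{l-1}$ in a neighbourhood of the compact set $\gref(I)$. Since $\dgref(t)\in{\rm Lie}_{\gref(t)}(\mathscr{K}_{l-1})={\rm Lie}_{\gref(t)}(\mathscr{V})$ and the latter Lie algebra has constant rank, Corollary~\ref{tracking-control-linear} supplies a smooth trajectory $\gamma_1$ of the driftless kinematic system $(Q,\mathscr{V},\R^m)$ with $\gamma_1(0)=\gref(0)$ and $\mathrm{d}(\gref(t),\gamma_1(t))<\epsilon/2$ on $I$.

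The third step lifts $\gamma_1$ to a controlled trajectory of a FACCS with controls in $\mathscr{K}_l$. Pick generators $\mathscr{U}=\{U_1,\dots,U_r\}$ of $\mathscr{K}_l$ along $\gamma_1$; the inclusion ${\rm Sym}^{(1)}(\mathscr{V})\subseteq{\rm Sym}^{(1)}(\mathscr{K}_{l-1})\subseteq\mathscr{K}_l=\mathrm{span}_\R\mathscr{U}$ established in step one lets me invoke Theorem~\ref{thm-KR}, producing a smooth control $u_2$ for which $\gamma_1$ is a trajectory of $\Sigma_l=(Q,\nabla,0,\mathscr{U},\R^r)$. Now the assumption $Y(t,p)\in\mathscr{K}_l(\tau_Q(p))$ together with the constant rank of $\mathscr{K}_l$ allows me to write $Y(t,\dot\gamma_1(t))$ as a smooth linear combination of $U_1(\gamma_1(t)),\dots,U_r(\gamma_1(t))$; rearranging and using $\pm U_a\in\mathscr{K}_l$ (consequence of ${\rm L}(\mathscr{K}_l)=\mathscr{K}_l$), I express $\nabla_{\dot\gamma_1}\dot\gamma_1-Y(t,\dot\gamma_1)$ as a combination with nonnegative smooth coefficients of vector fields in $\mathscr{K}_l$. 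Proposition~\ref{thm:oldtrackK} then yields strong trackability of $\gamma_1$ for $\Sigma$, providing a trajectory $\gamma_3$ of $\Sigma$ with $\dot\gamma_3(0)=\dot\gamma_1(0)$ and $\mathrm{d}(\gamma_1(t),\gamma_3(t))<\epsilon/2$ on $I$. The triangle inequality finishes the proof; the ``in particular'' statement is immediate because the hypothesis $\dgref(t)\in{\rm Lie}_{\gref(t)}(\mathscr{K}_{l-1})$ then holds for every smooth curve.
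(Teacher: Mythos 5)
Your proposal reproduces the paper's proof essentially verbatim: the same kinematic-reduction inclusion ${\rm Sym}^{(1)}(\mathscr{K}_{l-1})\subseteq\mathscr{K}_l$ established via the linearity hypotheses, identity~\eqref{malKrange}, and polarization; the same choice of local generators $\mathscr{V}$ and $\mathscr{U}$; the same invocation of Corollary~\ref{tracking-control-linear}, Theorem~\ref{thm-KR}, and Proposition~\ref{thm:oldtrackK}; and the same concluding triangle-inequality step. The approach and all key steps coincide with those of the paper.
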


\subsection{Example}\label{SExample}

In this section we apply Theorem~\ref{Thm:ACCStrackZ} to a control system studied in~\cite{BS2010} and \cite{Mario}, completing the discussion on its trackability by 
tackling a case which was not covered by previously known criteria. 

The system models a neutrally buoyant ellipsoidal vehicle immersed in a
infinite volume fluid that is inviscid, incompressible and whose
motion is irrotational. The dynamics are obtained through Kirchhoff
equations \cite{Lamb} and have a particularly simple form due to
some symmetry assumption on the distribution of mass (see
\cite{Mario} for details and also \cite{munnier}
for general overview of control 
motion in a potential
fluid).

Consider the coordinates $(\omega,v)$ for the angular and linear
velocity of the ellipsoid with respect to a body-fixed
coordinate frame. Then the impulse $(\Pi,P)$ of the system is given
by
\[ \begin{pmatrix} \Pi \\ P \end{pmatrix} = {\mathcal M}
\begin{pmatrix} \omega \\ v \end{pmatrix}  \]
where, under the symmetry assumptions mentioned above,
$$
{\mathcal M}=\mathrm{diag}(J_1,J_2,J_3,M_1,M_2,M_3),
$$
$\mathrm{diag}(J_1,J_2,J_3)$ is the usual inertia matrix, and $M_1,M_2,M_3$ take into account the mass of the submarine and the added masses due to the action of the fluid.

The configuration manifold $Q$ for this problem is the Special
Euclidean group or the group of rigid motions ${\rm SE}(3)$, which is
homeomorphic to $SO(3)\times \mathbb{R}^3$. Let $(A,r)\in {\rm SE}(3)$ be
the attitude and the position of the ellipsoid.
Denote by $S\colon \mathbb{R}^3\rightarrow \mathfrak{so}(3)$ the
linear bijection between $\mathbb{R}^3$ and the linear algebra
$\mathfrak{so}(3)$ of $SO(3)$ such that
\[S(x_1,x_2,x_3)=\begin{pmatrix}0 & -x_3 & x_2 \\ x_3 & 0 & -x_1 \\ -x_2 & x_1 & 0
\end{pmatrix}.\]
 The dynamics
of the controlled system are given by
\begin{equation}\label{eq-below}
\ds{\frac{{\rm d}A}{{\rm d} t}=
A S(\omega), \quad \frac{{\rm d}r}{{\rm d} t}= Av,}\end{equation}
and
\begin{equation}\label{eq-KirchSubm}
\ds{\frac{{\rm d} \Pi}{{\rm d}t}} = \Pi \times \omega + P \times v+
\begin{pmatrix} u_1 \\ u_2 \\ 0 \end{pmatrix}, \quad
\ds{\frac{{\rm d} P}{{\rm d}t}}  = P \times \omega+
\begin{pmatrix} 0 \\ 0 \\ u_3 \end{pmatrix}.\end{equation}
The controls correspond to a linear acceleration along one of the three axes of the submarine and to two angular accelerations around the other two axes.
It was proven in \cite{Mario} that 
if  $M_1\ne M_2$ then system~\eqref{eq-below}--\eqref{eq-KirchSubm} satisfies the SCTP.
In  \cite{BS2010}, moreover, based on general quantitative estimates of the convergence yielding the sufficient conditions for tracking recalled in 
Proposition~\ref{prop:track-Z}, an explicit tracking algorithm was proposed. 

The Lie group structure of the configuration manifold can be exploited to compute Lie brackets and symmetric products of left-invariant vector fields, and in particular of the control  vector fields. (Otherwise, one can directly apply \eqref{triple_bracket}.)
It turns out (see~\cite{2005BulloAndrewBook,Bullo} for details)
 that for $\eta_1=\begin{pmatrix} S(w_1) & v_1 \\ 0 & 0 \end{pmatrix}$, $\eta_2=\begin{pmatrix} S(w_2) & v_2 \\ 0 & 0 \end{pmatrix}\in \mathfrak{se}(3)$, identified with the corresponding left-invariant vector fields,  
\begin{equation}
\left[\eta_1,\eta_2\right]= 
\begin{pmatrix} [S(w_1),S(w_2)] & S(w_1) v_2-S(w_2)v_1 \\ 0 & 0 \end{pmatrix}.\label{eq:LieBracket}
\end{equation}
Let $\{e_1,\dots, e_6\}$ be a basis adapted to the coordinates $(A,r)$ so that $\eta_a=\eta_a^ie_i$ for $a=1,2$. It is then possible to compute the symmetric product as follows:
\begin{equation}
\langle \eta_1 \colon \eta_2 \rangle=- {\mathcal M}^{-1} ({\rm ad}_{\eta_1}^* {\mathcal M} \eta_2+{\rm ad}_{\eta_2}^* {\mathcal M} \eta_1),
\label{sym-sub}
\end{equation}
where $({\rm ad}^*_\eta \alpha) \xi=\alpha({\rm ad}_\eta \xi)$ for $\eta,\xi \in \mathfrak{se}(3)$, $\alpha\in \mathfrak{se}^*(3)$.

The structural constants with respect to the basis $\{e_1,\dots, e_6\}$ are defined as
\begin{equation*}
c_{ij}^k=[e_i,e_j]^k, \quad \gamma_{ij}^k=\langle e_i\colon e_j \rangle^k.
\end{equation*}
By the expressions of the Lie bracket and the symmetric product given in~\eqref{eq:LieBracket} and \eqref{sym-sub}, respectively, it follows that
 $$\gamma^k_{ij}=-{\mathcal M}^{hk}({\mathcal M}_{il} c^l_{jh}+{\mathcal M}_{jl} c^l_{ih}),$$  being ${\mathcal M}^{hk}$ the entries of the inverse matrix of ${\mathcal M}$.

One can easily compute that 
\begin{equation*}
\begin{array}{l}
c^1_{23}=c^2_{31}=c^3_{12}=c^4_{26}=c^4_{53}=c^5_{34}=c^5_{61}=c^6_{15}=c^6_{42}=1,\\  
c^1_{32}=c^2_{13}=c^3_{21}=c^4_{62}=c^4_{35}=c^5_{43}=c^5_{16}=c^6_{51}=c^6_{24}=-1,
\end{array}
\end{equation*}
and
\begin{equation*}
\begin{array}{lcl}
\gamma^1_{32}=\gamma^1_{23}=\dfrac{J_3-J_2}{J_1}, && \gamma^1_{56}=\gamma^1_{65}=\dfrac{M_3-M_2}{J_1}, \\ [3mm]
\gamma^2_{31}=\gamma^2_{13}=\dfrac{J_1-J_3}{J_2}, && \gamma^2_{46}=\gamma^2_{64}=\dfrac{M_1-M_3}{J_2}, \\ [3mm]
  \gamma^3_{21}=\gamma^3_{12}=\dfrac{J_2-J_1}{J_3}, &&   \gamma^3_{45}=\gamma^3_{54}=\dfrac{M_2-M_1}{J_3}, \\[3mm]
  \gamma^4_{26}=\gamma^4_{62}=\dfrac{M_3}{M_1}, &&   \gamma^4_{35}=\gamma^4_{53}=-\dfrac{M_2}{M_1}, \\[3mm]
  \gamma^5_{16}=\gamma^5_{61}=-\dfrac{M_3}{M_2}, &&   \gamma^5_{34}=\gamma^5_{43}=\dfrac{M_1}{M_2}, \\[3mm]
  \gamma^6_{15}=\gamma^6_{51}=\dfrac{M_2}{M_3}, &&   \gamma^6_{24}=\gamma^6_{42}=-\dfrac{M_1}{M_3}, 
\end{array}
\end{equation*}
while all other structural constants are equal to zero.

Notice that 
the control vector fields are the vertical lift to $T{\rm SE}(3)$ of 
\begin{equation*}
Y_1=\dfrac{1}{J_1}\, e_1, \quad Y_2=\dfrac{1}{J_2} \, e_2, \quad Y_3=\dfrac{1}{M_3} \, e_6,
\end{equation*}
where left-invariant vector fields are identified with elements of the Lie algebra of ${\rm SE}(3)$.

We focus here on the case that was left unanswered in~\cite{BS2010} and \cite{Mario}, namely, the case where
\begin{equation}\label{gamma=0}
J_1=J_2,\qquad M_1=M_2.
\end{equation}
Such a case cannot be studied using the most general sufficient conditions for trackability given in \cite{BS2010} (recalled in Proposition~\ref{thm:oldtrackK}), as illustrated by the computations here below.

Under condition \eqref{gamma=0}, one easily computes that
\begin{equation*}
\langle Y_1 \colon Y_2 \rangle=0,\quad \langle Y_1 \colon Y_3 \rangle=-\dfrac{1}{J_1M_1} \, e_5, \quad  \langle Y_2 \colon Y_3 \rangle=\dfrac{1}{J_1M_1} \, e_4.
\end{equation*}
Moreover,
$$
\langle e_j\colon e_j\rangle=0\quad\mbox{ for $1\leq j\leq 6.$}
$$
Let $Y_4=-\dfrac{1}{J_1M_1} \, e_5$ and $Y_5=\dfrac{1}{J_1M_1} \, e_4$. 
Hence, $\mathscr{Z}_1=\{Y_j\mid 1\leq j\leq 5\}$ and condition (\ref{Zbuona}) in the statement of Theorem~\ref{Thm:ACCStrackZ} 
is satisfied for $i=0$. 

Straightforward computations also give that $\langle Y_1\colon Y_4\rangle$ and 
$\langle Y_2\colon Y_5\rangle$
are proportional to $Y_3$, $\langle Y_3\colon Y_4\rangle$ is proportional to $Y_1$, $\langle Y_3\colon Y_5\rangle$ is proportional to $Y_5$, while 
$$\langle Y_1\colon Y_5\rangle=\langle Y_2\colon Y_4\rangle=\langle Y_4\colon Y_5\rangle=0.$$

Thus,  $\mathscr{Z}_i(q)={\rm span}_{\mathbb{R}}\{ e_1,e_2,e_4,e_5,e_6\}_q$ for every  $q\in {\rm SE}(3)$ and every $i\geq 1$, with condition (\ref{Zbuona}) in the statement of Theorem~\ref{Thm:ACCStrackZ} satisfied for every $i\geq 0$.

Note that 
\begin{equation*}
[Y_1,Y_2]=\dfrac{1}{J_1^2}e_3\,.
\end{equation*}
Thus, ${\rm Lie}_q\left(\mathscr{Z}_1\right)=T_qQ$ for every $q\in {\rm SE}(3)$. By Theorem~\ref{Thm:ACCStrackZ} the configuration tracking property is guaranteed for these control vector fields. 
This completes the results in \cite{BS2010} and \cite{Mario}, allowing to conclude that system~\eqref{eq-below}--\eqref{eq-KirchSubm}
satisfies 
the CTP for any choice of the (positive definite) diagonal inertial matrix ${\mathcal M}$.

\section*{Acknowledgements}

The authors would like to thank Gr\'egoire Charlot, whose help was crucial for obtaining the results in Section~\ref{s-gregoire}.

\bibliographystyle{elsarticle-num}
\bibliography{refs}

\end{document}